\newcommand*{\arXiv}[1]{\bgroup\color{blue}\href{http://arxiv.org/abs/#1}{arXiv:#1}\egroup}
\newcommand*{\doi}[1]{\bgroup\color{blue}\href{http://dx.doi.org/#1}{doi:#1}\egroup}
\newcommand*{\email}[1]{\bgroup\color{blue}\href{mailto:#1}{#1}\egroup}
\renewcommand*{\url}[1]{\bgroup\color{blue}\href{#1}{#1}\egroup}
\newcommand{\todo}[1]{\bgroup\color{red}\bfseries#1\egroup}
\newcommand*{\ppara}[1]{\noindent\textbf{\textsf{#1}}\,\,}
\newcommand{\COMMENT}[1]{}
\newcommand*{\bbeta}{\underline{\beta}}
\newcommand*{\ggamma}{\underline{\gamma}}
\newcommand*{\ddelta}{\underline{\delta}}
\newcommand*{\Ball}{\mathbb{B}}
\newcommand*{\Cauchy}{\mathcal{C}}
\newcommand*{\defeq}{\coloneqq}
\newcommand*{\defterm}[1]{\textbf{#1}}
\newcommand*{\disteq}{\ensuremath{\mathrel{\stackrel{\mathrm{d}}{=}}}}
\newcommand*{\E}{\mathbb{E}}
\newcommand*{\Hellinger}{\ensuremath{d_{\mathrm{H}}}}
\newcommand*{\N}{\mathbb{N}}
\newcommand*{\Normal}{\mathcal{N}}
\newcommand*{\one}{\mathbf{1}}
\newcommand*{\quark}{\setbox0\hbox{$x$}\hbox to\wd0{\hss$\cdot$\hss}}
\newcommand*{\rd}{\mathrm{d}}
\newcommand*{\R}{\mathbb{R}}
\newcommand*{\sign}{\mathop{\mathrm{sgn}}\nolimits}
\newcommand*{\Stable}{\mathcal{S}}
\newcommand*{\UU}{\mathcal{U}}
\newcommand*{\YY}{\mathcal{Y}}
\newtheorem{theorem}{Theorem}[section]
\newtheorem{proposition}[theorem]{Proposition}
\theoremstyle{definition}
\newtheorem{definition}[theorem]{Definition}
\newtheorem{remark}[theorem]{Remark}
\newtheorem{example}[theorem]{Example}
\newtheorem{assumption}[theorem]{Assumption}
\begin{document}

\title{Well-posed Bayesian inverse problems and heavy-tailed stable quasi-Banach space priors\footnote{To appear in \emph{Inverse Problems and Imaging}.  This preprint differs from the final published version in layout and typographical details.}}

\author{%
	T.\ J.\ Sullivan\footnote{Institute of Mathematics, Free University of Berlin, and Zuse Institute Berlin, Takustrasse 7, 14195 Berlin, Germany, \email{sullivan@zib.de}} %
}

\date{\today}

\maketitle

\begin{abstract}
	\ppara{Abstract:}
	This article extends the framework of Bayesian inverse problems in infinite-dimensional parameter spaces, as advocated by Stuart (\textit{Acta Numer.}\ 19:451--559, 2010) and others, to the case of a heavy-tailed prior measure in the family of stable distributions, such as an infinite-dimensional Cauchy distribution, for which polynomial moments are infinite or undefined.
	It is shown that analogues of the Karhunen--{Lo\`eve} expansion for square-integrable random variables can be used to sample such measures on quasi-Banach spaces.
	Furthermore, under weaker regularity assumptions than those used to date, the Bayesian posterior measure is shown to depend Lipschitz continuously in the Hellinger metric upon perturbations of the misfit function and observed data.
	\smallskip
	
	\ppara{Keywords:} Bayesian inverse problems, heavy-tailed distribution, Karhunen--{Lo\`eve} expansion, quasi-Banach spaces, stable distribution, uncertainty quantification, well-posedness
	
	\smallskip
	
	\ppara{2010 Mathematics Subject Classification:} 
	65J22 
	(35R30, 
	60E07, 
	62F15, 
	62G35, 
	60B11, 
	28C20)
\end{abstract}

\section{Introduction}
\label{sec:introduction}

The Bayesian perspective on inverse problems has attracted much mathematical attention in recent years \citep{KaipioSomersalo:2005, Stuart:2010}.
Particular attention has been paid to Bayesian inverse problems (BIPs) in which the parameter to be inferred lies in an infinite-dimensional space $\UU$, a typical example being a scalar or tensor field coupled to some observed data via an ordinary or partial differential equation.
Numerical solution of such infinite-dimensional BIPs must necessarily be performed in an approximate manner on a finite-dimensional subspace, but it is profitable to delay discretisation to the last possible moment and consider the original infinite-dimensional problem as the primary object of study, since infinite-dimensional well-posedness results and algorithms descend to any finite-dimensional subspace in a discretisation-independent way, whereas careless early discretisation may lead to a sequence of well-posed finite-dimensional BIPs or algorithms whose stability properties degenerate as the discretisation dimension increases.
Well-posedness results for Banach $\UU$ have been established for infinite-dimensional Gaussian priors by \citet{Stuart:2010}, for Besov priors by \citet{DashtiHarrisStuart:2012}, and for log-concave priors with exponentially thin tails by \citet{HosseiniNigam:2017}.
There is a parallel approach of discretisation invariance, introduced by Lehtinen in the 1990s and advanced by e.g.\ \citet{LassasSaksmanSiltanen:2009}, in which the finite-dimensional BIP is the primary object, but care is taken to ensure the existence of a well-defined continuum limit independent of the discretisation.
A common assumption in these works is some exponential integrability of the prior, and one purpose of this article is to relax this by permitting the prior to be heavy-tailed in the sense of only having finite polynomial moments of order $0 \leq p < \alpha$ for some $\alpha < \infty$, and to explicitly identify the growth rates in the misfit potential that are permissible in such a setting.
This article also permits $\UU$ to be only a \emph{quasi}-normed complete space, i.e.\ a quasi-Banach space.

A prototypical heavy-tailed prior on $\R$ is the Cauchy distribution with location $\delta \in \R$ and width $\gamma > 0$, here denoted $\Cauchy(\delta, \gamma)$, which has the Lebesgue density
\begin{equation}
	\label{eq:Cauchy_density}
	\frac{\rd \Cauchy(\delta, \gamma)}{\rd u} (u) = \frac{1}{\gamma \pi} \frac{1}{1 + ((u - \delta) / \gamma)^{2}} .
\end{equation}
$\Cauchy(\delta, \gamma)$ arises straightforwardly as the distribution of the ratio of two independent Gaussian random variables:
\begin{equation}
	\label{eq:quotient}
	\delta + \frac{x}{z} \sim \Cauchy(\delta, \gamma) \text{ when } x \sim \Normal(0, \gamma^{2}), z \sim \Normal(0, 1) \text{ are independent}.
\end{equation}
$\Cauchy(\delta, \gamma)$ has no well-defined mean, even though it is `obviously' centred on $\delta$, nor indeed polynomial moments of any order greater than $\alpha = 1$.
Despite this, the Cauchy distribution arises naturally in even quite elementary applications.
For example, Cauchy distributions arise naturally from quotients of Gaussian random variables, as in \eqref{eq:quotient}.
More geometrically, if uniform measure on a circle is projected radially onto any line not passing through the centre of the circle, as in Figure \ref{fig:radial}, then the image measure is Cauchy.
\citet{MarkkanenRoininenHuttunenLasanen:2016} have recently reported numerical results on the use of heavy-tailed priors for edge-preserving Bayesian inversion in X-ray tomography, where the seemingly natural choice of a total variation regularisation term cannot be interpreted as a discretisation-invariant Bayesian prior \citep{LassasSiltanen:2004}.

In a Bayesian context, the use of a heavy-tailed prior model in preference to one with exponentially small tails corresponds to a prior belief that large deviations are not exponentially rare events.
For example, in a wavelet basis of $L^{2}([0, 1], \rd x)$, it is not rare to draw samples with localised large deviations (see Figure \ref{fig:wavelets});
physically, these might correspond to inclusions in an otherwise relatively homogeneous material matrix, or edges in a piecewise smooth image.
The asymmetry between the two models is starkly illustrated the following information-theoretic calculation of the Kullback--Leibler divergences (relative entropy distances) between a standard normal and a standard Cauchy distribution on $\R$:
\[
	D_{\textup{KL}} \bigl( \Normal(0, 1) \big\| \Cauchy(0, 1) \bigr) \approx 0.2592 < \infty = D_{\textup{KL}} \bigl( \Cauchy(0, 1) \big\| \Normal(0, 1) \bigr) .
\]
Thus, the approximation of a heavy-tailed Cauchy prior by a thin-tailed Gaussian prior represents an infinite loss of information.
However, asymmetrically, the `defensive' adoption of a Cauchy prior in place of a Gaussian one represents a mild loss of information, with which one gains access to large deviations that would be exponentially rare in the Gaussian model.

The family of stable distributions generalises both the Cauchy and Gaussian examples.
Because the stable family is, by definition, closed under linear combinations of independent members, it is an attractive model for spatial or temporal phenomena that decompose in an additive way over disjoint subsets of space or time.
So, for example, a stable distribution is a natural modelling choice for the net external forces imparted on a passive tracer particle in some medium over a time interval:
a Gaussian model leads to Brownian motion, whereas other stable models lead to L\'evy flights.

Thus, after establishing some background and notation in Section \ref{sec:notation}, the purpose of this article is twofold:

Section \ref{sec:sampling} shows how to define quasi-Banach space analogues of heavy-tailed stable distributions via Karhunen--{Lo\`eve}-like random series, and studies their convergence and integrability properties.
The usual variance-based arguments cannot be applied directly, but the situation can be repaired using Kolmogorov's three series theorem, and notably the same conditions on the decay of the coefficients suffice for the heavy-tailed stable case as in the Gaussian case.

Section \ref{sec:well-posed_BIP} shows that the usual results on the Hellinger well-posedness of BIPs with respect to perturbations of the observed data and the misfit functional (negative log-likelihood) hold in the case of a heavy-tailed prior, under weaker continuity assumptions than those used to date.
Non-trivial growth of lower bounds on the misfit functional, which is typically enjoyed in applications, can and should be used to offset growth in other errors and retain well-posedness of the BIP.

\begin{figure}[t!]
	\begin{center}
		\includegraphics[width=0.6\textwidth]{./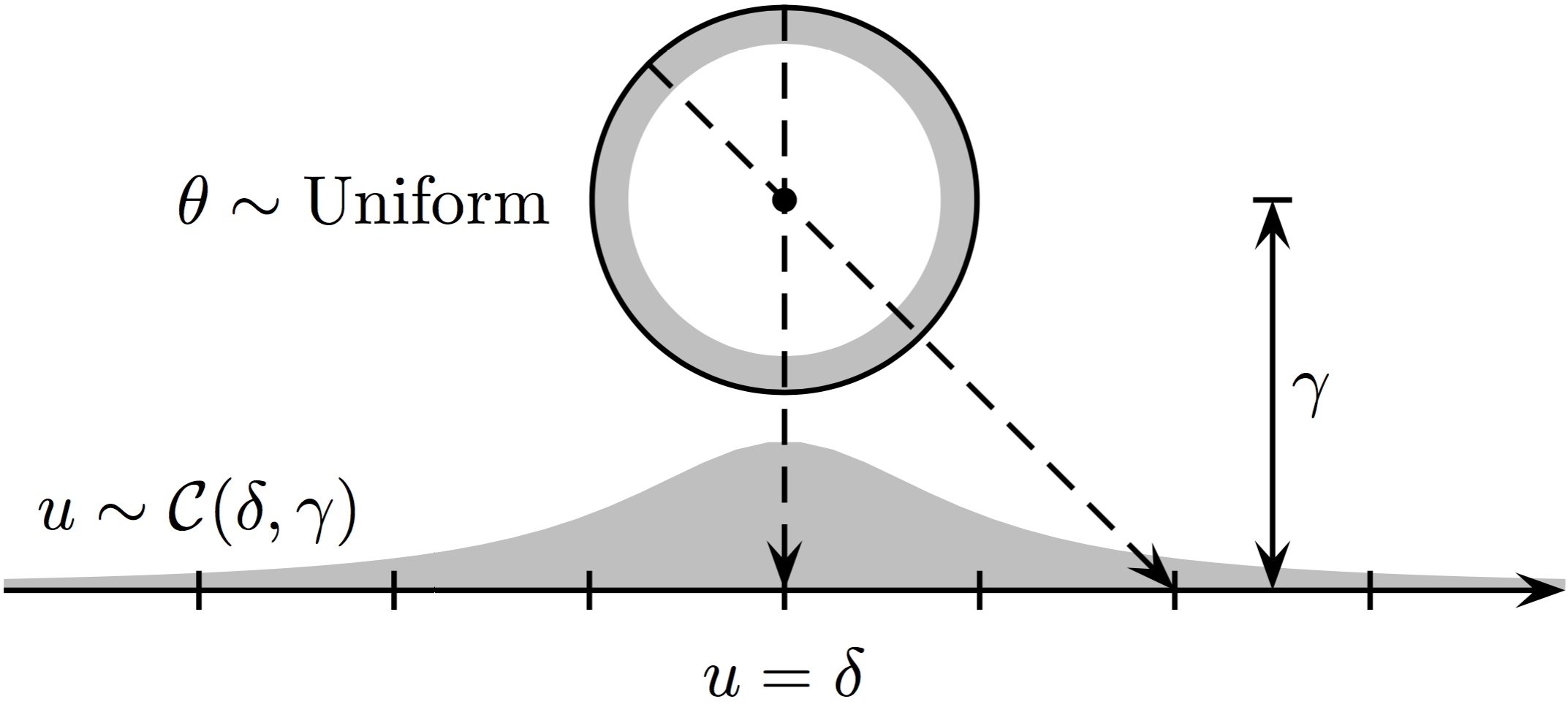}
	\end{center}
	\caption{Uniform angular measure on a circle projects radially to give Cauchy measure with width parameter $\gamma$ on any line at distance $\gamma$ from the centre of the circle.}
	\label{fig:radial}
\end{figure}

\begin{figure}[t!]
	\begin{center}
		\begin{subfigure}{0.49\linewidth}
			\centering\includegraphics[width=\linewidth]{./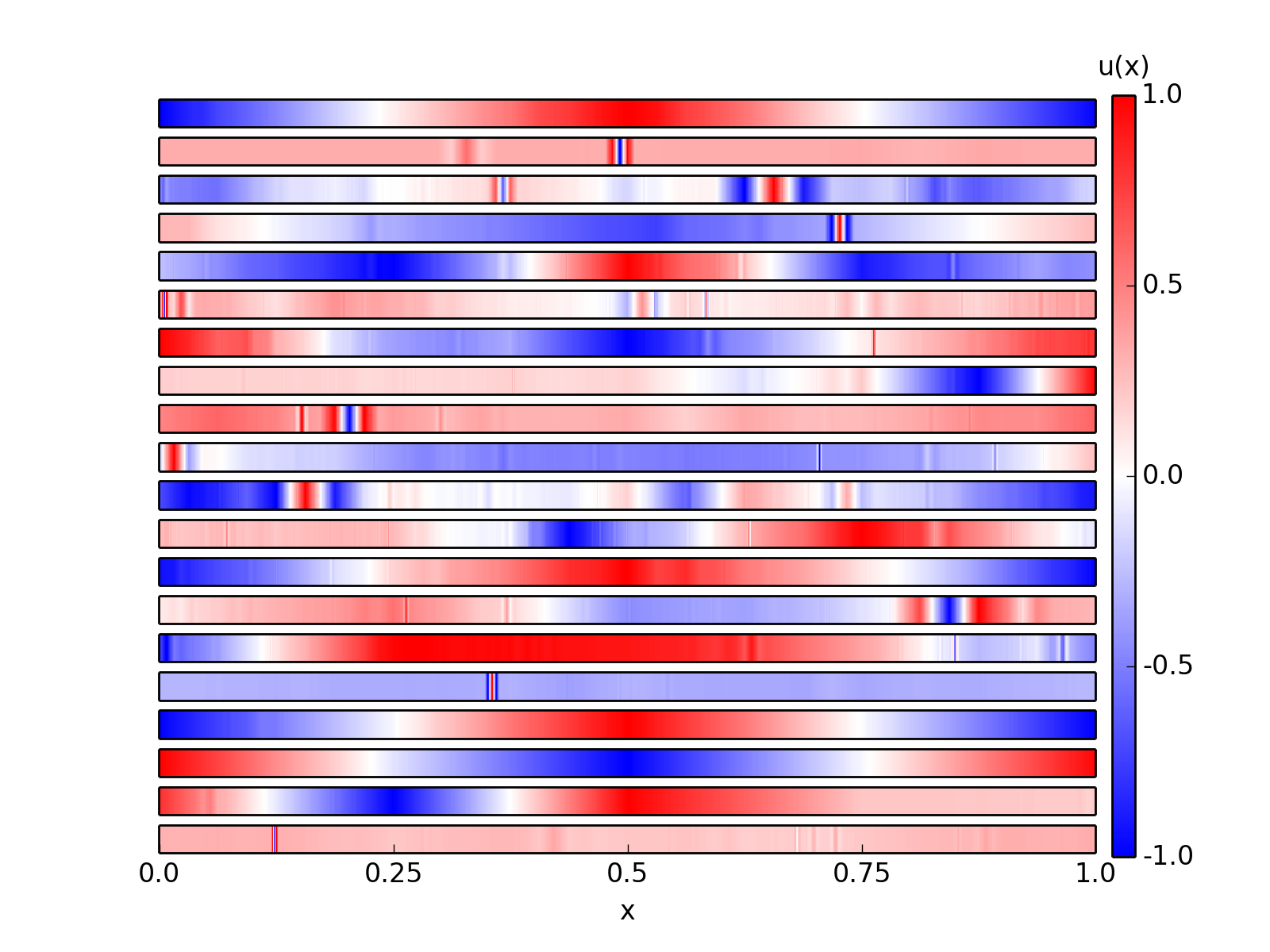}
			\caption{Linear splines, Cauchy coefficients}
		\end{subfigure}
		\begin{subfigure}{0.49\linewidth}
			\centering\includegraphics[width=\linewidth]{./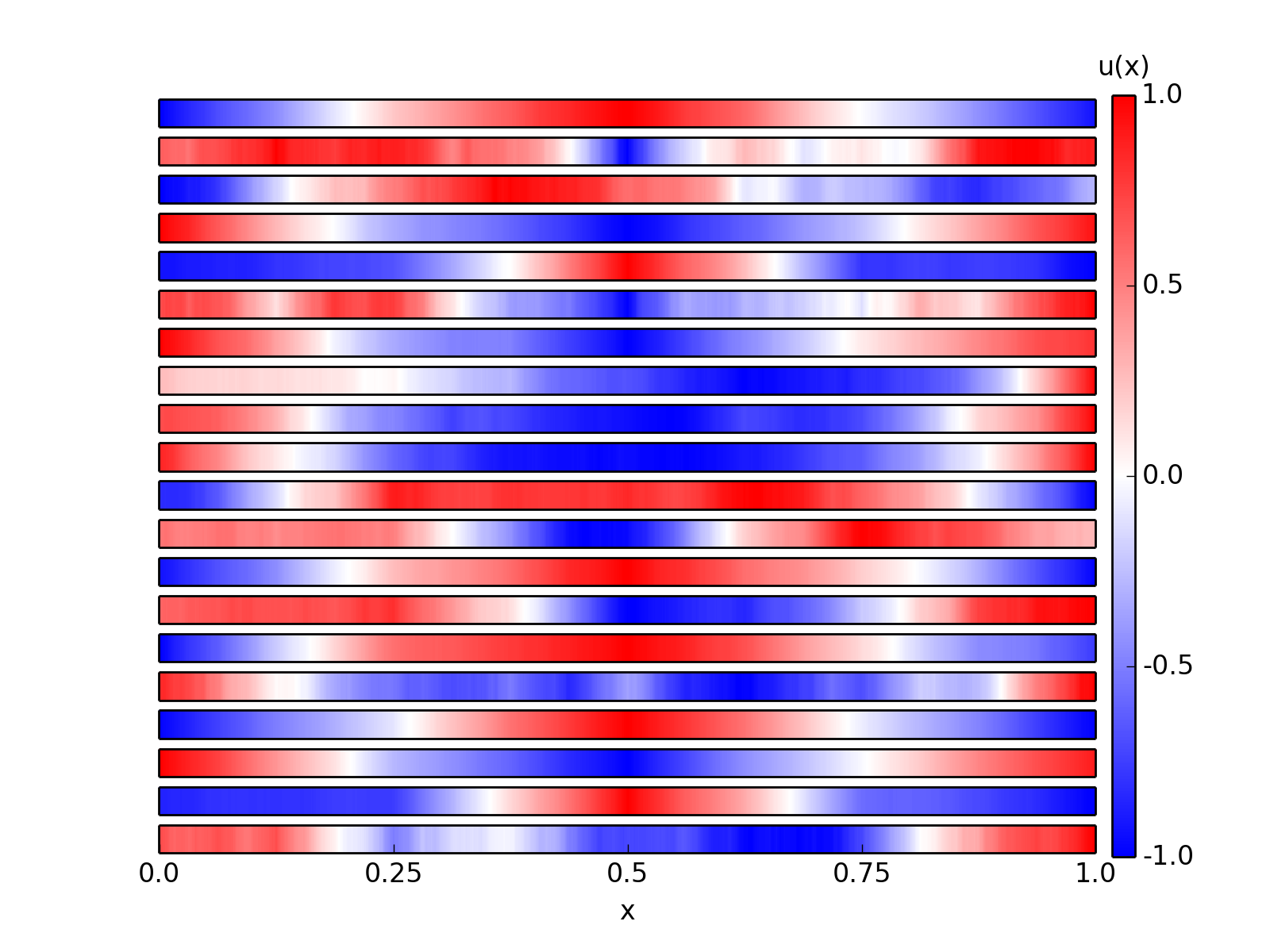}
			\caption{Linear splines, Gaussian coefficients}
		\end{subfigure}
	\end{center}
	\caption{Cauchy and Gaussian wavelet expansions in the linear spline orthonormal basis of $L^{2}([0, 1], \rd x)$.
	Each horizontal stripe shows a random function $u(x) = \sum_{j = 0}^{J} \sum_{k = 0}^{2^{j} - 1} u_{j, k} 2^{j / 2} \psi(2^{j} x - k)$, where each $u_{j , k} = (j + 1)^{-2} 2^{-j}$ times a standard Cauchy or normal draw, and $\psi$ denotes the mother wavelet.
	The plots show $20$ i.i.d.\ samples with $J = 10$.
	Theorem \ref{thm:almost_sure_convergence} ensures a.s.\ convergence in $L^{2}([0, 1])$ as $J \to \infty$.
	To enable easy comparisons between plots, the ensemble has been translated and linearly scaled to take values $u(x) \in [0, 1]$, and the same random seed is used in each case.
	Note well the large local deviations in the Cauchy case.}
	\label{fig:wavelets}
\end{figure}


\section{Background and notation}
\label{sec:notation}

\subsection{General notation}
\label{subsec:notation}

The setting for the inference problems in this paper will be a real and separable Banach or quasi-Banach space $\UU$.
Observed data will take values in another real and separable Banach or quasi-Banach space $\YY$.
Recall that in a quasi-Banach space the triangle inequality only holds in the weaker form
\[
	\| u + v \| \leq C \bigl( \| u \| + \| v \| \bigr)
\]
for some constant $C \geq 1$.
Typical examples of quasi-Banach spaces that are not Banach spaces include the $\ell^{p}$ and $L^{p}$ spaces for $0 < p < 1$.

Occasionally, we will need to make reference to an underlying probability space $(\Omega, \mathcal{F}, \mathbb{P})$ as a common domain of definition for all the $\R$-, $\UU$-, and $\YY$-valued random variables of interest.
$\one[P]$ denotes the indicator function of a measurable set or logical predicate $P$, e.g.\
\[
	\one[x \in E]
	\defeq
	\begin{cases}
		1, & \text{if $x \in E$,} \\
		0, & \text{if $x \notin E$.}
	\end{cases}
\]
A property will be said to hold almost surely if it fails only on a subset of a measurable set of measure zero, and this will be abbreviated to ``a.s.''
If $f \colon \UU \to \R$ is measurable, then $\E_{u \sim \mu}[f(u)]$ or simply $\E[f]$ denotes the expected value (Lebesgue integral) of $f$ with respect to $\mu$:
\[
	\E[f] \equiv \E_{u \sim \mu}[f(u)] \defeq \int_{\UU} f(u) \, \rd \mu(u).
\]
Equality in distribution (equality in law) for random variables $u$ and $v$ will be denoted $u \disteq v$.

The set of all Borel probability measures on $\UU$ will be denoted $\mathcal{M}_{1}(\UU)$, and $\Hellinger$ denotes the Hellinger metric on $\mathcal{M}_{1}(\UU)$, defined by
\begin{equation}
	\label{eq:Hellinger}
	\Hellinger(\mu, \nu)^{2} = \int_{\UU} \left| \sqrt{ \frac{\rd \mu}{\rd \lambda}(u) } - \sqrt{ \frac{\rd \nu}{\rd \lambda}(u) } \right|^{2} \, \rd \lambda(u) ,
\end{equation}
where $\lambda$ is any $\sigma$-finite Borel measure on $\UU$ with respect to which both $\mu$ and $\nu$ are absolutely continuous, e.g.\ $\lambda \defeq \mu + \nu$.
By Kraft's inequality \citep{Kraft:1955}, the Hellinger topology coincides with the total variation topology;
by Pinsker's inequality \citep{Pinsker:1964}, the Hellinger topology is strictly weaker than the Kullback--Leibler (relative entropy) topology;
all these topologies are strictly stronger than the topology of weak convergence of measures.
Expected values of square-integrable functions are Lipschitz continuous with respect to the Hellinger metric:
\begin{equation}
	\label{eq:Hellinger_bound}
	\bigl| \E_{\mu}[f] - \E_{\nu}[f] \bigr| \leq \sqrt{2} \sqrt{ \E_{\mu} \bigl[ |f|^{2} \bigr] + \E_{\nu} \bigl[ |f|^{2} \bigr] } \, \Hellinger(\mu, \nu)
\end{equation}
when $f \in L^{2}(\UU, \mu) \cap L^{2}(\UU, \nu)$.
In particular, $| \E_{\mu}[f] - \E_{\nu}[f] | \leq 2 \| f \|_{\infty} \Hellinger(\mu, \nu)$.

\subsection{Bayesian inverse problems}
\label{subsec:BIP}

This paper is concerned with inverse problems of the following form:
given spaces $\UU$ and $\YY$, and a known forward operator $G \colon \UU \to \YY$, recover $u \in \UU$ from a randomly corrupted observation $y \in \YY$ of $G(u)$.
A simple example is an inverse problem with additive noise, e.g.
\begin{equation}
	\label{eq:additive_noise}
	y = G(u) + \eta ,
\end{equation}
where $\eta$ is a draw from a $\YY$-valued random variable;
crucially, we assume knowledge of the probability distribution of $\eta$, but not its exact value.

Inverse problems are typically ill-posed in the sense of having no solution, or multiple solutions, or solutions that depend sensitively upon the observed data $y$.
While there is a long tradition dating back to \citet{Tikhonov:1963} and others of addressing such problems using regularisation, the Bayesian approach \citep{KaipioSomersalo:2005, Stuart:2010} is to interpret both $u$ and $y$ as random variables, and relations such as \eqref{eq:additive_noise} as defining the conditional distribution of $y$ given $u$.
First, one must posit prior beliefs about $u$ independent of $y$ in the form of a prior distribution $\mu_{0} \in \mathcal{M}_{1}(\UU)$.
Then, the Bayesian inverse problem (BIP) is to compute the posterior distribution $\mu^{y} \in \mathcal{M}_{1}(\UU)$, i.e.\ the conditional distribution of $u$ given $y$.
Naturally, one hopes to do this through an appropriate version of the Bayes formula, e.g.\ for probability densities with respect to Lebesgue measure on $\R^{n}$,
\[
	\rho^{y}(u) \equiv \rho(u|y) \propto \rho(y|u) \rho_{0}(u) ;
\]
in the case $\dim \UU = \infty$, in which there is no canonical choice of reference measure such as Lebesgue measure, this formula must be treated with some care.

As observed by \citet[Section 6.6]{Stuart:2010}, the correct statement of the Bayes formula when $\mu_{0}$ is supported on an infinite-dimensional parameter space $\UU$ is that the posterior $\mu^{y}$ has a probability density (Radon--Nikod\'ym derivative) with respect to $\mu_{0}$, and this density is proportional to the conditional probability density of $y | u$.
It is both mathematically and computationally convenient to express this relationship in exponential form.
That is, $\Phi \colon \UU \times \YY \to \R$ will denote the misfit or negative log-likelihood, meaning that, under the hypothesis that $u \in \UU$ is `correct', the probability distribution of $y | u$ is
\[
	\mathbb{P} [ y \in E | u ] = \left. \int_{E} \exp(- \Phi(u; y)) \, \rd \varrho(y) \middle/ \int_{\YY} \exp(- \Phi(u; y)) \, \rd \varrho(y) \right. ,
\]
where $\varrho$ is some $\sigma$-finite reference measure on $\YY$;
it is implicitly assumed that $y|u$ is absolutely continuous with respect to $\varrho$ for every $u \in \UU$.

In this setting, the generalised Bayes formula is
\begin{align}
	\label{eq:Bayes}
	\frac{\rd \mu^{y}}{\rd \mu_{0}} (u) & = \frac{\exp( - \Phi(u; y))}{Z(y)} ,\\
	\notag
	Z(y) & = \E_{u \sim \mu_{0}} \bigl[ \exp( - \Phi(u; y)) \bigr].
\end{align}
However, care must still be taken to check that this formula does define a probability measure $\mu^{y}$ on $\UU$;
in particular, the normalisation constant $Z(y)$ must be strictly positive and finite, and verifying this property for the stable priors $\mu_{0}$ of interest in this paper is the business of Theorem \ref{thm:posterior_defined}.

\begin{example}
	In the additive case \eqref{eq:additive_noise} with $\eta \sim \Normal(0, \Sigma)$ independently of $u$, on $\YY = \R^{n}$, with $\varrho = n$-dimensional Lebesgue measure,
	\[
		\Phi(u; y) = \tfrac{1}{2} \bigl\| \Sigma^{-1/2} ( y - G(u) ) \bigr\|_{2}^{2} .
	\]
	If $\dim \YY$ is infinite and $\eta \sim \Normal(0, \Sigma)$ is a Gaussian random variable on $\YY$ with Cameron--Martin space $\mathop{\textup{ran}}(\Sigma^{1/2})$, then this $\Phi$ is a.s.\ infinite since $y \notin \mathop{\textup{ran}}(\Sigma^{1/2})$ a.s.
	It is then necessary to `subtract off the infinite part of $\Phi$' by using the Cameron--Martin formula for translations of $\eta$ \citep[Remark 3.8]{Stuart:2010}.
\end{example}

\subsection{Stable distributions}
\label{subsec:stable}

Stable distributions have been studied extensively in the statistical and probabilistic literature.
A random variable $u$ is \defterm{stable} if, whenever $u_{1}, \dots, u_{n}$ are independent copies of $u$ and $a_{1}, \dots, a_{n} > 0$, $\sum_{i = 1}^{n} a_{i} u_{i} \disteq c u + d$ for some $c > 0$ and $d \in \R$.
The random variable is \defterm{strictly stable} if this holds with $d = 0$ for all choices of the $a_{i}$.
This relation can be made more quantitatively precise:
$u$ is \defterm{stable of order $\alpha \in (0, 2]$} if $\sum_{i = 1}^{n} u_{i} \disteq n^{1 / \alpha} u + d$.
Equivalently, in terms of the law $\mu$ of $u$ and the rescaling $\mu_{n}(E) \defeq \mu(n^{1/\alpha} E)$,
\[
	\mu = \underbrace{( \mu_{n} \star \dots \star \mu_{n} )}_{\text{$n$-fold convolution}} ( E + d ) 
	\quad
	\text{for all Borel-measurable $E$.}
\]
Stability is a particularly appealing property if the aim is to construct prior measures for BIPs that are `physically consistent' in the sense of remaining in the same model class regardless of discretisation or coordinate choices, at least when the `physical quantity' obeys an additive law.\footnote{\label{footnote:harmonic}There are situations where such an additive decomposition is not appropriate:
e.g., the average homogenised permeability for Darcy flow given the permeabilities of smaller grid cells is obtained as the harmonic rather than arithmetic mean.}

\begin{example}
	Suppose that the aim is to model (and later infer, in a Bayesian fashion) the distribution of electrical charge in some domain $\Omega \subseteq \R^{3}$.
	For computational purposes, $\Omega$ is approximated by a triangulation $\mathcal{T}$.
	Consider two elements $T_{1}, T_{2} \in \mathcal{T}$.
	If $\mathop{\textup{charge}}(T_{i})$ is stably distributed, then so too is
	\[
		\mathop{\textup{charge}}(T_{1} \cup T_{2}) = \mathop{\textup{charge}}(T_{1}) + \mathop{\textup{charge}}(T_{2}) .
	\]
	The charge density $\mathop{\textup{charge}}(T_{i}) / \mathop{\textup{volume}}(T_{i})$ behaves similarly.
	Thus, we remain in the same stable model class if we coarsen or refine the mesh $\mathcal{T}$;
	this would not be true for an unstable random model of the charge, and this would complicate computational modelling in an undesirable fashion.
\end{example}

Stable and strictly stable distributions on Banach spaces $\UU$, and indeed on locally convex topological vector spaces, can be defined in the same way as in the univariate case, by reference to sums of independent copies or convolutions of their laws \citep[Section 4.2]{Bogachev:2010}.
It can be shown that $\mu \in \mathcal{M}_{1}(\UU)$ is stable of order $\alpha$ precisely when all of its finite dimensional projections are stable of order $\alpha$, and if all one-dimensional projections of $\mu$ are strictly stable of order $\alpha$, then so is $\mu$.
These facts motivate further examination of stable distributions on $\R$.

Real-valued stable random variables are completely classified by four parameters, and of the many possible parametrisations, this article will follow ``Parametrisation 0'' of \citet{Nolan:2015}:
a random variable $u$ will be said to be \defterm{stably distributed} with index of stability $\alpha \in (0, 2]$, skewness $\beta \in [-1, 1]$, scale parameter $\gamma \geq 0$, and location parameter $\delta \in \R$, denoted $u \sim \Stable(\alpha, \beta, \gamma, \delta; 0)$, if the characteristic function (inverse Fourier transform) of $u$ satisfies
\[
	\E \bigl[ \exp( i t u ) \bigr] = 
	\begin{cases}
		\exp \bigl( i \delta t  - | \gamma t |^{\alpha} [ 1 + i \beta ( \tan \tfrac{\pi \alpha}{2} ) (\sign t) ( | \gamma t |^{1 - \alpha} - 1 ) ] \bigr) & \text{if $\alpha \neq 1$,} \\
		\exp \bigl(i \delta t  - | \gamma t | [ 1 + i \beta \tfrac{2}{\pi} (\sign t) \log \gamma | t | ] \bigr) & \text{if $\alpha = 1$.}
	\end{cases}
\]
(The convention here is that $0 \log 0 \defeq \lim_{s \searrow 0} s \log s = 0$.)
If $\gamma = 1$ and $\delta = 0$, then $u$ is said to be \defterm{standardised} and we write $u \sim \Stable(\alpha, \beta; 0)$.
When $\beta = \delta = 0$, $u$ is said to be \defterm{symmetric} and we obtain another common characterisation of (symmetric) $\alpha$-stable random variables:
those random variables with characteristic function $\exp( - | \gamma t |^{\alpha} )$.

Although a stable random variable $u$ can easily be shown to have a smooth Lebesgue density, exact formulae for this density are not available except in special cases.
In particular, the normal distribution with mean $m$ and standard deviation $\sigma$ is $\Stable(2, 0, \sigma / \sqrt{2}, m; 0)$, and $\Cauchy(\delta, \gamma) = \Stable(1, 0, \gamma, \delta; 0)$.
The stability properties of $\Stable(\alpha, \beta, \gamma, \delta; 0)$ distributions are summarised by the following result:

\begin{proposition}[{\citealp[Proposition 1.16]{Nolan:2015}}]
	\label{prop:Nolan_Proposition_1.16}
	If $u \sim \Stable(\alpha, \beta, \gamma, \delta; 0)$, then, for $a \neq 0$ and $b \in \R$,
	\[
		a u + b \sim \Stable(\alpha, (\sign a) \beta, | a | \gamma, a \delta + b; 0).
	\]
	Also, if $u_{1} \sim \Stable(\alpha, \beta_{1}, \gamma_{1}, \delta_{1}; 0)$ and $u_{2} \sim \Stable(\alpha, \beta_{2}, \gamma_{2}, \delta_{2}; 0)$ are independent, then $u_{1} + u_{2} \sim \Stable(\alpha, \beta, \gamma, \delta; 0)$ with
	\begin{align*}
		\beta & \defeq \frac{\beta_{1} \gamma_{1}^{\alpha} + \beta_{2} \gamma_{2}^{\alpha}}{\gamma_{1}^{\alpha} + \gamma_{2}^{\alpha}} , \\
		\gamma^{\alpha} & \defeq \gamma_{1}^{\alpha} + \gamma_{2}^{\alpha} , \\
		\delta & \defeq
		\begin{cases}
			\delta_{1} + \delta_{2} + ( \tan \tfrac{\pi \alpha}{2} ) ( \beta \gamma - \beta_{1} \gamma_{1} - \beta_{2} \gamma_{2} ) , & \text{if $\alpha \neq 1$,} \\
			\delta_{1} + \delta_{2} + \tfrac{2}{\pi} (\beta \gamma \log \gamma - \beta_{1} \gamma_{1} \log \gamma_{1} - \beta_{2} \gamma_{2} \log \gamma_{2}) , & \text{if $\alpha = 1$.}
		\end{cases}
	\end{align*}
\end{proposition}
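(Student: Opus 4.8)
The plan is to work entirely at the level of characteristic functions, using that a probability measure on $\R$ is uniquely determined by its characteristic function; it therefore suffices to check that the two sides of each claimed identity have the same characteristic function, with the parameters as stated. Throughout I would use the defining formula for the characteristic function of $\Stable(\alpha, \beta, \gamma, \delta; 0)$ displayed above, together with the convention $0 \log 0 = 0$, which makes the degenerate case $\gamma = 0$ (a point mass at $\delta$, for which both claims are immediate) a harmless special case.

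\emph{Affine transformations.} Writing $\varphi_{u}(t) = \E[\exp(itu)]$, one has $\E[\exp(it(au + b))] = e^{ibt} \varphi_{u}(at)$. For $\alpha \neq 1$ I would substitute $at$ for the Fourier variable $t$ in the defining formula and use $|\gamma (at)| = (|a|\gamma)|t|$, $\sign(at) = (\sign a)(\sign t)$, and $|\gamma(at)|^{1 - \alpha} = ((|a|\gamma)|t|)^{1 - \alpha}$; after multiplying by $e^{ibt}$ the exponent is exactly the defining exponent with $\delta \mapsto a\delta + b$, $\gamma \mapsto |a|\gamma$, $\beta \mapsto (\sign a)\beta$. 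The case $\alpha = 1$ is identical in structure, the only extra ingredient being the identity $\log(\gamma|at|) = \log((|a|\gamma)|t|)$, which shows that the rescaled scale parameter $|a|\gamma$ appears correctly inside the logarithm.

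\emph{Sums.} For independent $u_{1}, u_{2}$ the characteristic functions multiply, so their exponents add, and I would split the exponent of each $\varphi_{u_{j}}$ into three pieces: (i) the even-in-$t$ part $-\gamma_{j}^{\alpha}|t|^{\alpha}$; (ii) for $\alpha \neq 1$, the skewness part $+i\beta_{j}(\tan\tfrac{\pi\alpha}{2})\gamma_{j}^{\alpha}|t|^{\alpha}\sign t$; and (iii) a part linear in $t$, namely $i\delta_{j} t$ together with the residual $-i\beta_{j}(\tan\tfrac{\pi\alpha}{2})\gamma_{j} t$ that comes from expanding $|\gamma_{j} t|^{\alpha}(|\gamma_{j} t|^{1-\alpha} - 1) = \gamma_{j}|t| - \gamma_{j}^{\alpha}|t|^{\alpha}$ and using $|t|\sign t = t$. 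Matching the $|t|^{\alpha}$ terms of the summed exponent against the defining exponent forces $\gamma^{\alpha} = \gamma_{1}^{\alpha} + \gamma_{2}^{\alpha}$; matching the $|t|^{\alpha}\sign t$ terms forces $\beta\gamma^{\alpha} = \beta_{1}\gamma_{1}^{\alpha} + \beta_{2}\gamma_{2}^{\alpha}$, i.e.\ the stated formula for $\beta$; and the leftover linear-in-$t$ discrepancy is precisely the stated correction to $\delta$. For $\alpha = 1$ the same decomposition works with the non-power term $t\log|t|$ in place of $|t|^{\alpha}\sign t$, and the expansion $\log(\gamma_{j}|t|) = \log\gamma_{j} + \log|t|$ produces the logarithmic form of the $\delta$-correction.

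The computation has no analytic content; the only thing requiring care is the sign bookkeeping around $\sign(at)$ and, in the $\alpha = 1$ branch, keeping the $\log\gamma_{j}$ pieces (constants multiplying $t$, hence absorbable into the location) separate from the $\log|t|$ pieces (a genuine part of the stable characteristic function that cannot be absorbed). The reason the formulas come out as stated is exactly that the $\Stable(\,\cdot\,;0)$ parametrisation makes the location parameter the natural linear coordinate, so that every extra term generated by rescaling or convolution is linear in $t$ and can simply be read off as a shift in $\delta$.
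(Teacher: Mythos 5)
Your proposal is correct: the paper states this result purely as a citation of Nolan's Proposition~1.16 and gives no proof of its own, and your characteristic-function verification is exactly the standard argument behind the cited result. In particular, your decomposition of the exponent into the $|t|^{\alpha}$, $|t|^{\alpha}\sign t$ (resp.\ $t\log|t|$ when $\alpha=1$), and linear-in-$t$ pieces, using $|\gamma_{j}t|^{\alpha}\bigl(|\gamma_{j}t|^{1-\alpha}-1\bigr)=\gamma_{j}|t|-\gamma_{j}^{\alpha}|t|^{\alpha}$ and $|t|\sign t = t$, reproduces precisely the stated formulas for $\beta$, $\gamma$, and $\delta$ in both the $\alpha\neq 1$ and $\alpha=1$ branches.
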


The stable distributions with $\alpha = 2$ are exactly the Gaussian measures (the skewness parameter $\beta$ has no effect, and is conventionally set to $0$):
by Fernique's theorem, Gaussian measures are exponentially integrable, and in particular have polynomial moments of all orders.
Conversely, for $\alpha \in (0, 2)$, the stable distributions are all heavy-tailed:
when $u \sim \Stable(\alpha, \beta, \gamma, 0; 0)$ with $0 < \alpha < 2$,
\begin{equation}
	\label{eq:stable_flom}
	\E \bigl[ | u |^{p} \bigr] = 
	\begin{cases}
		C_{\alpha, \beta} \gamma^{\alpha} < \infty, & \text{for $0 < p < \alpha$,} \\
		\infty, & \text{for $p \geq \alpha$.}
	\end{cases}
\end{equation}
The asymptotic behaviour of the cumulative distribution and probability density functions of $u \sim \Stable(\alpha, \beta, \gamma, \delta; 0)$, with $0 < \alpha < 2$ and $- 1 < \beta \leq 1$, is that of a power law \citep[Theorem 1.12]{Nolan:2015}:
\begin{align}
	\label{eq:stable_tail_cdf}
	\mathbb{P}[u > x] & \sim c_{\alpha} \gamma^{\alpha} ( 1 + \beta ) x^{- \alpha} & & \text{as $x \to \infty$,} \\
	\label{eq:stable_tail_pdf}
	\rho_{u}(x) & \sim c_{\alpha} \alpha \gamma^{\alpha} ( 1 + \beta ) x^{- (\alpha + 1)} & & \text{as $x \to \infty$.}
\end{align}
Similar expressions hold for the behaviour as $x \to - \infty$.
Henceforth, to avoid some technical complications, we assume that $-1 < \beta < 1$, so that $u \sim \Stable(\alpha, \beta, \gamma, \delta; 0)$ is supported on the whole of $\R$.

One further theoretical argument in favour of modelling using stable random variables, particularly from a limiting mesh refinement point of view, is that the stable distributions are precisely the central limits of independent and identically distributed random variables:

\begin{theorem}[Generalised central limit theorem:  {\citealp[Theorem 1.20]{Nolan:2015}}]
	\label{thm:general_CLT}
	A non-degenerate random variable $u$ is $\Stable(\alpha, \beta, \gamma, \delta; 0)$ if and only if there is a sequence of i.i.d.\ random variables $x_{1}, x_{2}, \dots$ and constants $a_{n} > 0$, $b_{n} \in \R$ such that $b_{n} + a_{n} \sum_{i = 1}^{n} x_{i}$ converges in distribution to $u$ as $n \to \infty$.
\end{theorem}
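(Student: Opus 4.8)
The plan is to prove the two implications separately. The forward direction — that a $\Stable(\alpha,\beta,\gamma,\delta;0)$ variable arises as such a limit — is essentially immediate from the self-similarity built into the definition, so I would dispose of it first: given $u \sim \Stable(\alpha,\beta,\gamma,\delta;0)$, take $x_{1},x_{2},\dots$ to be i.i.d.\ copies of $u$, and invoke Proposition~\ref{prop:Nolan_Proposition_1.16} (iterated $n$ times, with all summands carrying identical parameters) to get $\sum_{i=1}^{n} x_{i} \disteq n^{1/\alpha} u + d_{n}$ for an explicit $d_{n}\in\R$. Then $a_{n} \defeq n^{-1/\alpha}$ and $b_{n} \defeq -n^{-1/\alpha} d_{n}$ give $b_{n} + a_{n}\sum_{i=1}^{n} x_{i} \disteq u$ for every $n$, hence, a fortiori, convergence in distribution.

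The substantive direction is the converse. Write $S_{n} \defeq \sum_{i=1}^{n} x_{i}$ and suppose $b_{n} + a_{n} S_{n}$ converges in distribution to a non-degenerate $u$; this forces $x_{1}$ to be non-degenerate and, by a routine tightness argument, $a_{n}\to 0$. Now fix $k\in\N$ and split $S_{nk}$ into $k$ independent blocks, each distributed as $S_{n}$. On one hand, normalising each block individually, the sum of the $k$ normalised blocks converges in distribution to $u_{1}+\dots+u_{k}$, where $u_{1},\dots,u_{k}$ are independent copies of $u$ (a non-degenerate limit); on the other hand $b_{nk} + a_{nk} S_{nk}$ converges in distribution to $u$. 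These two statements say that one sequence of random variables, after two different affine rescalings, converges to two non-degenerate limits, so the convergence-of-types theorem applies and yields $a_{nk}/a_{n} \to \lambda_{k} \in (0,\infty)$ together with
\[
	u_{1}+\dots+u_{k} \disteq \lambda_{k}^{-1} u + e_{k} \quad \text{for some } e_{k}\in\R .
\]
Associativity of the block decomposition gives the multiplicative relation $\lambda_{k\ell} = \lambda_{k}\lambda_{\ell}$, and, together with the monotonicity $\lambda_{k}\le 1$ (a sum of more independent copies must be spread more widely), this forces $\lambda_{k} = k^{-1/\alpha}$ for a unique $\alpha>0$; thus $u$ is stable of order $\alpha$ in the sense of Section~\ref{subsec:stable}.

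It then remains to show $\alpha\le 2$ and to recover the four parameters. Since $a_{n}\to 0$, the triangular array $\{a_{n} x_{i}\}$ is a null array, so $u$ is infinitely divisible and has a L\'evy--Khinchine characteristic exponent; feeding the scaling relation $u_{1}+\dots+u_{k} \disteq k^{1/\alpha} u + e_{k}$ into the uniqueness of the L\'evy triple forces the L\'evy measure to be of the form $(c_{+}\one[x>0] + c_{-}\one[x<0])\,|x|^{-1-\alpha}\,\rd x$, which integrates $\min(1,x^{2})$ — and hence is a genuine L\'evy measure — only for $\alpha<2$, the boundary value $\alpha=2$ corresponding to a vanishing L\'evy measure, i.e.\ to a Gaussian. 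Matching the resulting characteristic exponent against the characteristic function recorded in Section~\ref{subsec:stable} then identifies $\gamma$ up to a universal constant with $(c_{+}+c_{-})^{1/\alpha}$, the skewness $\beta$ with $(c_{+}-c_{-})/(c_{+}+c_{-})$, and $\delta$ with the residual drift, the case $\alpha=1$ needing the usual separate bookkeeping because $\tan\tfrac{\pi\alpha}{2}$ is singular there.

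I expect the main obstacle to be the heart of the converse: showing that the norming constants $a_{n}$ are regularly varying — equivalently, that $a_{nk}/a_{n}$ converges for each fixed $k$ — which is exactly what the convergence-of-types theorem extracts from the block decomposition, and then ruling out $\alpha>2$, for which the cleanest route is the infinite-divisibility/L\'evy--Khinchine uniqueness argument rather than a direct estimate on characteristic functions near the origin. Everything else is bookkeeping with Proposition~\ref{prop:Nolan_Proposition_1.16} and the explicit characteristic function, together with the elementary tightness facts used to obtain $a_{n}\to0$.
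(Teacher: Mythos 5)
The paper does not prove this statement at all: it is quoted directly from Nolan's monograph (Theorem 1.20 there), so there is no internal proof to compare yours against, and any complete argument you give is necessarily "a different route" in the sense of supplying what the paper deliberately defers to the literature. Your sketch is the classical Gnedenko--Kolmogorov/Feller argument and is sound in outline. The forward direction is fine; note only that for $\alpha = 1$, $\beta \neq 0$ the centring picks up an $n \log n$ term via Proposition~\ref{prop:Nolan_Proposition_1.16}, which your $n$-dependent $d_{n}$ already absorbs. In the converse, the block decomposition, convergence of types, multiplicativity $\lambda_{k\ell} = \lambda_{k}\lambda_{\ell}$, and the identification $\lambda_{k} = k^{-1/\alpha}$ are exactly the standard skeleton; the points needing more care than the sketch admits are: (i) $a_{n} \to 0$ rests on a concentration-function bound (L\'evy/Kolmogorov--Rogozin) for sums of non-degenerate i.i.d.\ variables, not on bare tightness; (ii) the scaling identity for the L\'evy measure is obtained only at the integer scale factors $k^{1/\alpha}$, and pinning down the exact form $c_{\pm} |x|^{-1-\alpha}\,\rd x$ requires either monotonicity of the tail functions or density of the ratios $k^{1/\alpha}/\ell^{1/\alpha}$; and (iii) ruling out $\alpha > 2$ needs one more line than "not a genuine L\'evy measure": the scaling forces the L\'evy measure and then also the Gaussian component mismatch to vanish, so $u$ would be Gaussian, but a Gaussian satisfies the scaling relation with exponent $2$, and the uniqueness of the constants in the convergence-of-types theorem then contradicts $\alpha \neq 2$. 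These are textbook-level details rather than gaps in strategy; your reconstruction is the expected proof of this cited result, and what it buys is a self-contained argument where the paper simply points to Nolan.
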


The literature contains many further application-specific arguments for or against the use of stable distributions in optimisation and inference.
\citet{OHagan:1988} gives a general perspective on modelling with heavy-tailed distributions.
\citet{ShaoNikias:1993} treat applications to physics, biology, and electrical engineering, particularly for the modelling of signals and noises with occasional sharp spikes or bursts, as in Figure \ref{fig:wavelets}.
\citet{TsakalidesReveliotisNikias:2000} and \citet{AchimTsakalidesBezerianos:2003} treat applications to communications and image processing, while \citep{Tsionas:1999} discusses applications to economics.
\citet{Hansen:2006} discuss applications to optimisation.

Of particular relevance to this article is the recent work of \citet{MarkkanenRoininenHuttunenLasanen:2016}, which proposes the use of heavy-tailed priors for edge-preserving (i.e.\ non-smoothing) Bayesian inversion in X-ray tomography;
in essence, a Cauchy prior is placed on the gradient of the image to be reconstructed, thereby allowing for jump discontinuities in the image.
One objective of this article is to provide a well-posedness theory in the style of \citet{Stuart:2010} to underwrite the numerical investigations of \citet{MarkkanenRoininenHuttunenLasanen:2016}.

\section{Karhunen--{Lo\`eve} expansions for stable distributions on quasi-Banach spaces}
\label{sec:sampling}

Now consider the problem of constructing and sampling heavy-tailed stable probability measures on a real quasi-Banach space $\UU$, for example a vector space of summable sequences or a Sobolev space of fields of specified smoothness.
Supposing that one already has access to a generator of real-valued stable random variables \citep{ChambersMallowsStuck:1976}, it is natural to try to realise a $\UU$-valued stable random variable via an infinite random series of the form
\begin{equation}
	\label{eq:random_series}
	u \defeq \sum_{n \in \N} u_{n} \psi_{n},
\end{equation}
where the $\psi_{n}$ are a basis for $\UU$ and the $u_{n}$ are $\R$-valued stable random variables;
this is the strategy used to generate the examples shown in Figure \ref{fig:wavelets}.
The natural question is, when does \eqref{eq:random_series} define a bona fide $\UU$-valued random variable?

The Gaussian case is a useful reference point.
Suppose that $C$ is a positive-semi-definite and self-adjoint operator on a Hilbert space $\UU$ with an eigensystem $(\lambda_{n}, \psi_{n})_{n \in \N}$, and that $(\lambda_{n})_{n \in \N} \in \ell^{1}$, i.e.\ $C$ is a trace-class operator.
Then the series \eqref{eq:random_series} with $u_{n} \sim \Normal(0, \lambda_{n})$ --- i.e.\ with $u_{n} = \lambda_{n}^{1/2} \hat{u}_{n}$ with $\hat{u}_{n} \sim \Normal(0, 1)$ --- converges a.s., and is a draw from the Gaussian measure $\Normal(0, C)$ on $\UU$ with covariance operator $C$.
Similar expansions with different powers of $\lambda_{n}$ and $\hat{u}_{n}$ having density proportional to $\exp( - | \hat{u}_{n} |^{p} )$ are used to define draws from Besov measures \citep{DashtiHarrisStuart:2012}.

However, the focus here is on $u_{n}$ with heavy tails, so the usual variance-based arguments that are used to prove a.s.\ convergence of the series \eqref{eq:random_series} will not be applicable.
However, Theorem \ref{thm:almost_sure_convergence} below shows that the series \eqref{eq:random_series} indeed converges almost surely in $\UU$ under the assumption that the scale parameters $\gamma_{n}$ of the stable random coefficients $u_{n}$ are $\alpha$-summable, modulo a logarithmic correction term in the case $\alpha = 1$.
The proof of this rests on the following result, which is a synthesis of two classical results from probability theory, and gives a necessary and sufficient condition for the convergence of random series:

\begin{theorem}[Kolmogorov's zero-one law and three series theorem]
	\label{thm:Kolmogorov_3}
	Let $(x_{n})_{n \in \N}$ be a sequence of independent $\R$-valued random variables.
	Then the series $\sum_{n \in \N} x_{n}$ either converges a.s.\ or diverges a.s, and a.s.\ convergence holds if and only if, for some $A > 0$, the following series are all finite:
	\[
		\sum_{n \in \N} \mathbb{P} \bigl[ | x_{n} | > A \bigr], \quad
		\sum_{n \in \N} \mathbb{E} \bigl[ x_{n} \one [ | x_{n} | \leq A ] \bigr] , \quad
		\text{and} \quad
		\sum_{n \in \N} \mathbb{E} \bigl[ x_{n}^{2} \one [ | x_{n} | \leq A ] \bigr] .
	\]
\end{theorem}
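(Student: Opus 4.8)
The statement is, as the authors note, a synthesis of classical facts, so my plan is to assemble it from four standard tools --- Kolmogorov's zero--one law, Kolmogorov's maximal (variance) inequality in both its upper and lower forms, and the two Borel--Cantelli lemmas --- and to spend the effort on how they fit together rather than on reproving them. The dichotomy is immediate: by the Cauchy criterion the event $\{ \sum_{n} x_{n} \text{ converges} \}$ equals $\bigcap_{k \in \N} \bigcup_{M \in \N} \bigcap_{N \geq M} \{ | \sum_{n = M}^{N} x_{n} | < 1/k \}$; deleting finitely many initial summands alters neither this event nor its probability, so it lies in the tail $\sigma$-algebra $\bigcap_{M} \sigma(x_{M}, x_{M+1}, \dots)$; and this tail field is, by the usual $\pi$--$\lambda$ argument, independent of $\sigma(x_{1}, x_{2}, \dots)$ and hence of itself, so every tail event has probability equal to its own square, that is, equal to $0$ or $1$.

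For sufficiency I would fix an $A$ making all three series finite and set $y_{n} \defeq x_{n} \one[ |x_{n}| \leq A ]$ and $z_{n} \defeq y_{n} - \mathbb{E}[y_{n}]$. The first series being finite, the first Borel--Cantelli lemma gives $x_{n} = y_{n}$ for all large $n$ a.s., so $\sum_{n} x_{n}$ and $\sum_{n} y_{n}$ converge or diverge together; convergence of the deterministic series $\sum_{n} \mathbb{E}[y_{n}]$ then reduces matters to a.s.\ convergence of $\sum_{n} z_{n}$. The $z_{n}$ are independent and centred with $\sum_{n} \operatorname{Var}(z_{n}) \leq \sum_{n} \mathbb{E}[y_{n}^{2}] < \infty$, so applying Kolmogorov's maximal inequality to the tail blocks $z_{M}, z_{M+1}, \dots$ yields $\mathbb{P}\bigl[ \sup_{k \geq M} \bigl| \sum_{n = M}^{k} z_{n} \bigr| > \varepsilon \bigr] \leq \varepsilon^{-2} \sum_{n \geq M} \operatorname{Var}(z_{n}) \to 0$ as $M \to \infty$; a Borel--Cantelli argument along a subsequence of $M$'s upgrades this to the statement that the partial sums are a.s.\ Cauchy, hence a.s.\ convergent, and therefore $\sum_{n} z_{n}$, and with it $\sum_{n} x_{n}$, converges a.s.

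For necessity I would assume $\sum_{n} x_{n}$ converges a.s.\ and fix any $A > 0$. Then $x_{n} \to 0$ a.s., so $\{ |x_{n}| > A \}$ occurs only finitely often a.s., and since these events are independent the \emph{second} Borel--Cantelli lemma forces $\sum_{n} \mathbb{P}[|x_{n}| > A] < \infty$ --- the first series --- whence $\sum_{n} y_{n}$ converges a.s. Taking an independent copy $(y_{n}')$ of $(y_{n})$ and symmetrising, $s_{n} \defeq y_{n} - y_{n}'$ is independent, symmetric (so centred), bounded by $2A$, with $\sum_{n} s_{n}$ convergent a.s. Here I would invoke the \emph{lower} Kolmogorov inequality: if $\sum_{n \leq N} \operatorname{Var}(s_{n})$ diverged, then $\mathbb{P}[ | \sum_{n \leq N} s_{n} | \leq t ] \to 0$ for every fixed $t$, contradicting a.s.\ (hence in-probability) convergence of the partial sums; thus $\sum_{n} \operatorname{Var}(s_{n}) = 2 \sum_{n} \operatorname{Var}(y_{n}) < \infty$. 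Summable variances make $\sum_{n} (y_{n} - \mathbb{E}[y_{n}])$ converge a.s.\ by the argument of the previous paragraph, and comparing this with the a.s.\ convergence of $\sum_{n} y_{n}$ gives convergence of $\sum_{n} \mathbb{E}[y_{n}]$ --- the second series --- while the third holds in its standard centred form $\sum_{n} \operatorname{Var}(y_{n}) < \infty$. Since this argument works for every $A$, it holds in particular for some $A$, as the statement asks.

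The genuine obstacle is the implication ``$\sum_{n} x_{n}$ converges a.s.\ $\Rightarrow$ $\sum_{n} \operatorname{Var}(y_{n}) < \infty$'' in the necessity direction: the upper maximal inequality does not see this, and one really does need the two-sided Kolmogorov inequality together with the symmetrisation device in order to reduce to bounded symmetric summands. Everything else --- the zero--one dichotomy, the two Borel--Cantelli applications, and the ``summable variances $\Rightarrow$ a.s.\ convergence'' lemma on which both directions lean --- is routine by comparison.
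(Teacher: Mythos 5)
The paper itself offers no proof of this statement: it is quoted as a synthesis of Kolmogorov's zero--one law and three-series theorem and is used downstream only through its sufficiency half (in Theorem \ref{thm:almost_sure_convergence}) and, for nonnegative summands, its necessity half (in Example \ref{eg:almost_sure_convergence_Cauchy}). So there is no internal argument to compare against; judged on its own terms, your assembly is the standard textbook proof (tail-event dichotomy via the zero--one law; sufficiency by truncation, centring, and Kolmogorov's maximal inequality; necessity by the second Borel--Cantelli lemma plus symmetrisation and the two-sided Kolmogorov inequality), and its structure is sound. One imprecision worth fixing: the ``lower'' Kolmogorov inequality bounds $\mathbb{P}\bigl[\max_{k \leq N} \bigl| \sum_{n \leq k} s_{n} \bigr| \leq t\bigr]$, not $\mathbb{P}\bigl[\bigl|\sum_{n \leq N} s_{n}\bigr| \leq t\bigr]$; the contradiction still follows at once, since a.s.\ convergence forces $\sup_{k} \bigl|\sum_{n \leq k} s_{n}\bigr| < \infty$ a.s.\ while divergent total variance of bounded centred summands sends the maximal probability to zero (alternatively, use L\'evy's inequality or the CLT for the endpoint), but you should phrase it that way.

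The genuine discrepancy concerns the third series. The statement asks for finiteness of the uncentred series $\sum_{n} \E\bigl[x_{n}^{2} \one[|x_{n}| \leq A]\bigr]$, whereas your necessity argument produces only $\sum_{n} \mathrm{Var}(y_{n}) < \infty$, and you quietly substitute ``its standard centred form''. This is not a gap you could close: the ``only if'' direction is false for the uncentred form as literally written. Take $x_{n} \defeq (-1)^{n} n^{-1/2}$, deterministic and hence independent; then $\sum_{n} x_{n}$ converges a.s., and for every $A > 0$ the first series is finite and the second converges, yet $\sum_{n} \E\bigl[x_{n}^{2}\one[|x_{n}|\leq A]\bigr] = \sum_{n} n^{-1} = \infty$. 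The correct classical statement has $\mathrm{Var}\bigl(x_{n}\one[|x_{n}|\leq A]\bigr)$ in the third series, which is exactly what you prove; you should say so explicitly rather than pass over the mismatch. The misstatement is harmless for the paper's purposes: sufficiency only becomes stronger with the uncentred hypothesis (since $\mathrm{Var}(y_{n}) \leq \E[y_{n}^{2}]$), which is all that Theorem \ref{thm:almost_sure_convergence} needs, and in Example \ref{eg:almost_sure_convergence_Cauchy} the summands are nonnegative and bounded after truncation, so $(\E[y_{n}])^{2} \leq A\,\E[y_{n}]$ makes the centred and uncentred forms equivalent there.
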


\begin{definition}
	\label{defn:U-stable}
	Let $\UU$ be a real quasi-Banach space with countable, unconditional, normalised, Schauder basis $(\psi_{n})_{n \in \N}$.
	Let $\alpha \in (0, 2]$, $\bbeta = (\beta_{n})_{n \in \N} \subset (-1, 1)$, $\ggamma = (\gamma_{n})_{n \in \N} \subset \R_{+}$, and $\ddelta = (\delta_{n})_{n \in \N} \subset \R$.
	Let $u_{n} \sim \Stable(\alpha, \beta_{n}, \gamma_{n}, \delta_{n}; 0)$ be independent for each $n \in \N$.
	Then we shall say that $u \defeq \sum_{n \in \N} u_{n} \psi_{n}$ is a \defterm{stable $\UU$-valued random variable} and write $u \sim \Stable(\alpha, \bbeta, \ggamma, \ddelta; 0)$.
\end{definition}

Theorem \ref{thm:almost_sure_convergence} will justify the terminology of Definition \ref{defn:U-stable} by showing that, under suitable summability conditions on $\ggamma$ and $\ddelta$, $u \sim \Stable(\alpha, \bbeta, \ggamma, \ddelta; 0)$ is indeed a well-defined $\UU$-valued random variable.
First, it is necessary to make an assumption on the geometry of the basis $(\psi_{n})_{n \in \N}$.

\begin{assumption}
	\label{ass:q-frame_upper}
	The basis $(\psi_{n})_{n \in \N}$ and $q > 0$ are such that the synthesis operator $S_{\psi} \colon \underline{v} \defeq (v_{n})_{n \in \N} \mapsto \sum_{n \in \N} v_{n} \psi_{n}$ is a continuous embedding of the sequence space $\ell^{q}$ of coefficients into $\UU$, i.e.
	\begin{equation}
		\label{eq:q-frame_upper}
		\Biggl\| \sum_{n \in \N} v_{n} \psi_{n} \Biggr\|_{\UU} \leq C \| \underline{v} \|_{\ell^{q}}.
	\end{equation}
\end{assumption}

When $\UU$ is a Banach space, Assumption \ref{ass:q-frame_upper} holds with $q = 1$ for any choice of basis $(\psi_{n})_{n \in \N}$, since it is just the triangle inequality for an unconditionally convergent series in $\UU$.
Since $0 < p \leq q \leq \infty \implies \| \quark \|_{\ell^{q}} \leq \| \quark \|_{\ell^{p}}$, whenever \eqref{eq:q-frame_upper} holds for $q$ it also holds with $q$ replaced by any $p \in (0, q]$.
If inequality \eqref{eq:q-frame_upper} can be reversed, possibly with a different constant, then the basis $(\psi_{n})_{n \in \N}$ is known as a \defterm{$q$-frame} for $\UU$ \citep{ChristensenStoeva:2003}.
The case $q = 2$ is the well-known notion of a \defterm{Riesz basis}.

\begin{theorem}[Well-definedness of $\UU$-valued stable random variables]
	\label{thm:almost_sure_convergence}
	Let $u \sim \Stable(\alpha, \bbeta, \ggamma, \ddelta; 0)$ with $\alpha \in (0, 2)$, $\bbeta \subset (-1, 1)$, $\ggamma \in \ell^{\alpha}$, $\ddelta \in \ell^{q}$ and, in addition,
	\begin{align}
		\label{eq:Orlicz}
		[ \ggamma ]_{\ell^{\alpha} \log \ell} & \defeq \sum_{n \in \N} \bigl| \gamma_{n}^{\alpha} \log | \gamma_{n} | \bigr| < \infty, & & \text{if $\alpha = q$ or $2 q$.}
	\end{align}
	Then $u \in \UU$ a.s.
\end{theorem}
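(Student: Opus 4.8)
The plan is to reduce the statement, via Assumption~\ref{ass:q-frame_upper} and completeness of $\UU$, to the almost-sure finiteness of a single $\R$-valued random series, and then to establish that finiteness with Kolmogorov's three series theorem (Theorem~\ref{thm:Kolmogorov_3}).

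\emph{Reduction to a scalar series.}
Write $S_{N} \defeq \sum_{n = 1}^{N} u_{n} \psi_{n}$. Inequality~\eqref{eq:q-frame_upper} gives, for $M < N$,
\[
	\| S_{N} - S_{M} \|_{\UU} \leq C \Biggl( \sum_{n = M + 1}^{N} | u_{n} |^{q} \Biggr)^{1 / q} ,
\]
so on the event $\bigl\{ \sum_{n \in \N} | u_{n} |^{q} < \infty \bigr\}$ the partial sums form a Cauchy sequence and hence converge in $\UU$. It is therefore enough to prove $\sum_{n \in \N} | u_{n} |^{q} < \infty$ almost surely. By Proposition~\ref{prop:Nolan_Proposition_1.16} we may write $u_{n} \disteq \gamma_{n} \hat{u}_{n} + \delta_{n}$ with $\hat{u}_{n} \sim \Stable(\alpha, \beta_{n}; 0)$ standardised and independent across $n$; using $| a + b |^{q} \leq \kappa_{q} ( | a |^{q} + | b |^{q} )$ and $\ddelta \in \ell^{q}$, the task reduces to showing $\sum_{n \in \N} \gamma_{n}^{q} | \hat{u}_{n} |^{q} < \infty$ a.s.

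\emph{Application of the three series theorem.}
The summands $x_{n} \defeq \gamma_{n}^{q} | \hat{u}_{n} |^{q}$ are independent and non-negative, so by Theorem~\ref{thm:Kolmogorov_3} it suffices to find $A > 0$ for which
\[
	\sum_{n} \mathbb{P}[ x_{n} > A ], \qquad \sum_{n} \E\bigl[ x_{n} \one[ x_{n} \leq A ] \bigr], \qquad \sum_{n} \E\bigl[ x_{n}^{2} \one[ x_{n} \leq A ] \bigr]
\]
all converge. Each is controlled by the heavy-tailed decay~\eqref{eq:stable_flom}--\eqref{eq:stable_tail_cdf}, in the quantitative form of a bound $\mathbb{P}[ | \hat{u}_{n} | > t ] \leq K_{\alpha} ( 1 + t )^{- \alpha}$ valid \emph{uniformly in $n$}: this follows from~\eqref{eq:stable_tail_cdf} because the leading constants of the two one-sided tails, proportional to $1 + \beta_{n}$ and $1 - \beta_{n}$, sum to the $\beta_{n}$-independent value $2$, together with a compactness argument in $\beta \in [-1, 1]$. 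Securing this uniformly in $n$ --- recall that $\bbeta$ need only lie in the open interval $(-1, 1)$ and may approach its endpoints --- is the first point requiring care.

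\emph{The three estimates.}
Put $M_{n} \defeq A^{1/q} / \gamma_{n}$, which tends to $\infty$ because $\ggamma \in \ell^{\alpha}$. The first series is $\sum_{n} \mathbb{P}[ | \hat{u}_{n} | > M_{n} ] \leq K_{\alpha} \sum_{n} ( 1 + M_{n} )^{- \alpha} \lesssim \sum_{n} \gamma_{n}^{\alpha} < \infty$. For the second series take $s \defeq q$, for the third take $s \defeq 2q$; then the change of variables $t = r^{s}$ gives
\[
	\E\bigl[ x_{n}^{s / q} \one[ x_{n} \leq A ] \bigr] \leq \gamma_{n}^{s} \, \E\bigl[ \min\{ | \hat{u}_{n} |^{s}, M_{n}^{s} \} \bigr] = \gamma_{n}^{s} \, s \int_{0}^{M_{n}} r^{s - 1} \, \mathbb{P}[ | \hat{u}_{n} | > r ] \, \rd r \lesssim \gamma_{n}^{s} \biggl( 1 + \int_{1}^{M_{n}} r^{s - 1 - \alpha} \, \rd r \biggr) ,
\]
and the inner integral is $O( M_{n}^{s - \alpha} )$ if $s > \alpha$, is $O( \log M_{n} ) = O( | \log \gamma_{n} | )$ if $s = \alpha$, and is $O(1)$ if $s < \alpha$. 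Multiplying by $\gamma_{n}^{s}$ produces a summand of order $\gamma_{n}^{\alpha}$, of order $\gamma_{n}^{\alpha} | \log \gamma_{n} |$, or of order $\gamma_{n}^{s}$, respectively; summed over $n$, these are finite by $\ggamma \in \ell^{\alpha}$ and, precisely in the borderline cases $s = \alpha$ --- that is, $\alpha = q$ for the second series and $\alpha = 2q$ for the third --- by the Orlicz-type hypothesis~\eqref{eq:Orlicz}. This verifies all three series, whence $\sum_{n} x_{n} < \infty$ a.s., and the reduction above completes the proof. The two steps that genuinely require work are the uniformity in $n$ of the stable tail bound and the bookkeeping of the logarithmic corrections at the critical exponents $\alpha \in \{ q, 2q \}$.
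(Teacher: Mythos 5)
Your proof is essentially the paper's own: the same reduction via Assumption \ref{ass:q-frame_upper} and the quasi-norm triangle inequality to the a.s.\ convergence of the scalar series $\sum_{n} | \gamma_{n} \hat{u}_{n} |^{q}$, followed by the same verification of Kolmogorov's three series (Theorem \ref{thm:Kolmogorov_3}) from the stable tail behaviour \eqref{eq:stable_tail_cdf}--\eqref{eq:stable_tail_pdf}, with the Orlicz hypothesis \eqref{eq:Orlicz} absorbing the logarithmic divergence exactly at the critical exponents $\alpha = q$ and $\alpha = 2q$. The differences are minor: you estimate the truncated moments through the tail c.d.f.\ (layer-cake formula) rather than by integrating the density, you are more explicit than the paper about needing the tail bound uniformly in $n$ (i.e.\ in $\beta_{n}$), and in your sub-critical case $s < \alpha$ (i.e.\ $q < \alpha$ or $2q < \alpha$) the correct bound $\gamma_{n}^{s}$ is summable from $\ggamma \in \ell^{\alpha}$ only when $s \geq \alpha$ --- but that is precisely the regime in which the paper's own ``otherwise'' bound $C | \gamma_{n} |^{\alpha}$ is equally unjustified, so your argument matches the published proof there as well.
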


\begin{proof}
	For each $n \in \N$, let $\hat{u}_{n} \sim \Stable(\alpha, \beta_{n}; 0)$, so that $u_{n} \disteq \delta_{n} + \gamma_{n} \hat{u}_{n}$.
	For $M, N \in \N$ with $N > M$,
	\begin{align*}
		& \Biggl\| \sum_{n = 1}^{N} u_{n} \psi_{n} - \sum_{n = 1}^{M} u_{n} \psi_{n} \Biggr\|_{\UU} 
		= \Biggl\| \sum_{n = M + 1}^{N} (\delta_{n} + \gamma_{n} \hat{u}_{n}) \psi_{n} \Biggr\|_{\UU} \\
		& \quad \leq C \Biggl\| \sum_{n = M + 1}^{N} \delta_{n} \psi_{n} \Biggr\|_{\UU} + C \Biggl\| \sum_{n = M + 1}^{N} \gamma_{n} \hat{u}_{n} \psi_{n} \Biggr\|_{\UU} & & \text{since $\| \quark \|_{\UU}$ is a quasi-norm} \\
		& \quad \leq C \sum_{n = M + 1}^{N} | \delta_{n} |^{q} + C \sum_{n = M + 1}^{N} | \gamma_{n} \hat{u}_{n} |^{q} & & \text{by Assumption \ref{ass:q-frame_upper}.}
	\end{align*}
	If $\ddelta \in \ell^{q}$, then the dominated convergence theorem implies that (deterministic) first sum on the right-hand side converges to $0$ as $N, M \to \infty$.
	Therefore, it remains only to show that the assumptions on $\ggamma$ are sufficient to ensure that the (random) second sum on the right-hand side converges a.s.\ to $0$ as $N, M \to \infty$;
	it will then follow that the partial sums of $u$ are a.s.\ Cauchy in the quasi-Banach space $\UU$, and hence a.s.\ convergent to a well-defined limit in $\UU$.
	
	To that end, it will be shown that $\sum_{n \in \N} | \gamma_{n} \hat{u}_{n} |^{q}$ converges a.s.\ in $\R$.
	Let $A > 0$ be large enough that the asymptotic properties \eqref{eq:stable_tail_cdf} and \eqref{eq:stable_tail_pdf} hold true for $| x | > A$.
	Then, by \eqref{eq:stable_tail_cdf},
	\[
		\mathbb{P} \bigl[ | \gamma_{n} \hat{u}_{n} |^{q} > A \bigr] \sim C \left| \frac{\gamma_{n}}{A^{1/q}} \right|^{\alpha},
	\]
	where $C$ depends only on $\alpha$ and $\beta_{n}$.
	Since $\ggamma \in \ell^{\alpha}$, the series $\sum_{n \in \N} \mathbb{P} \bigl[ | \gamma_{n} \hat{u}_{n} |^{q} > A \bigr]$ is convergent.
	By \eqref{eq:stable_tail_pdf}, for $p = 1, 2$, the truncated $p^{\text{th}}$ moments of $| \gamma_{n} \hat{u}_{n} |^{q}$ satisfy
	\begin{align*}
		\E \bigl[ | \gamma_{n} \hat{u}_{n} |^{p q} \one [ | \gamma_{n} \hat{u}_{n} |^{q} < A ] \bigr] 
		& = | \gamma_{n} |^{p q} \int_{- A^{1/q} / \gamma_{n}}^{A^{1/q} / \gamma_{n}} | s |^{p q} \rho_{\hat{u}_{n}} (s) \, \rd s \\
		& \leq 
		\begin{cases}
			C | \gamma_{n}^{\alpha} \log \gamma_{n} | & \text{if $p q = \alpha$,} \\
			C | \gamma_{n} |^{\alpha}, & \text{otherwise,}
		\end{cases}
	\end{align*}
	where $C$ depends on $\alpha$, $\beta_{n}$, $p$, and $A$ but is independent of $\gamma_{n}$.
	The assumptions on $\ggamma$ ensure that these truncated moments are both summable over all $n \in \N$ for $p = 1$ and $p = 2$.
	Therefore, Theorem \ref{thm:Kolmogorov_3} implies that $\sum_{n \in \N} | \gamma_{n} \hat{u}_{n} |^{q}$ converges a.s.\ in $\R$, and so \eqref{eq:random_series} converges a.s.\ in $\UU$.
\end{proof}

\begin{example}
	\label{eg:almost_sure_convergence_Cauchy}
	Suppose that $\UU$ is a Banach space (so we may take $q = 1$, but perhaps no greater).
	If the coefficients $u_{n}$ in \eqref{eq:random_series} are independent Cauchy random variables, $u_{n} \disteq \gamma_{n} \hat{u}_{n} \sim \Cauchy(0, \gamma_{n}) = \Stable(1, 0, \gamma_{n}, 0; 0)$, then the truncated moments are
	\begin{align}
		\label{eq:Cauchy_moment_0}
		\mathbb{P} \bigl[ | \gamma_{n} \hat{u}_{n} | \geq A \bigr] & = 1 - \frac{2}{\pi} \arctan \frac{A}{\gamma_{n}} , \\
		\label{eq:Cauchy_moment_1}
		\E \bigl[ | \gamma_{n} \hat{u}_{n} | \one [ | \gamma_{n} \hat{u}_{n} | < A ] \bigr] & = \frac{\gamma_{n}}{\pi} \log \left( 1 + \frac{A^{2}}{\gamma_{n}^{2}} \right) , \\
		\label{eq:Cauchy_moment_2}
		\E \bigl[ | \gamma_{n} \hat{u}_{n} |^{2} \one [ | \gamma_{n} \hat{u}_{n} | < A ] \bigr] & = \frac{2 A \gamma_{n}}{\pi} + \frac{2 \gamma_{n}^{2}}{\pi} \arctan \frac{A}{\gamma_{n}} .
	\end{align}
	Consistent with Theorem \ref{thm:almost_sure_convergence}, the corresponding three series all converge if $\| \ggamma \|_{\ell^{1}}$ and $[ \ggamma ]_{\ell \log \ell}$ are finite, and in particular if $\gamma_{n} = \textup{O}(n^{- r})$ for some $r > 1$.
	When this convergence holds, the random series \eqref{eq:random_series} converges a.s.\ in $\UU$, and thereby defines a $\UU$-valued Cauchy random variable.
	
	This example also shows that Theorem \ref{thm:almost_sure_convergence} is sharp:
	for $\UU = \ell^{1}$ with its standard Euclidean basis $( \psi_{n} )_{n \in \N}$, with $u_{n} \sim \Cauchy(0, \gamma_{n})$, 
	\[
		\left\| \sum_{n = 1}^{N} u_{n} \psi_{n} - \sum_{n = 1}^{M} u_{n} \psi_{n} \right\|_{\ell^{1}} = \sum_{n = M + 1}^{N} | u_{n} | ,
	\]
	so the partial sums of $u$ are a.s.\ Cauchy in $\ell^{1}$ if and only if the real random series $\sum_{n \in \N} | u_{n} |$ is a.s.\ convergent, and the `if and only if' part of Kolmogorov's three-series theorem and exact values \eqref{eq:Cauchy_moment_0}--\eqref{eq:Cauchy_moment_2} for the truncated moments together imply that this holds exactly when $\| \ggamma \|_{\ell^{1}}$ and $[ \ggamma ]_{\ell \log \ell}$ are finite.
	
	Condition \eqref{eq:Orlicz}, requiring in this case that the Orlicz-type quantity $[ \ggamma ]_{\ell \log \ell}$ be finite, cannot generally be weakened to just requiring that $\ggamma \in \ell^{1}$.
	For example, for $\gamma_{n} \defeq n^{-1} ( \log n )^{-2}$, the integral test reveals that $\sum_{n \geq 2} | \gamma_{n} | < \infty$ but $\sum_{n \geq 2} | \gamma_{n} \log \gamma_{n} | = \infty$;
	in this situation, summability of the truncated first absolute moments of the coefficients $\gamma_{n} \hat{u}_{n}$ is no longer assured.
	However, for polynomial $\ggamma$, the $\ell^{1}$ and $\ell \log \ell$ criteria \emph{do} coincide:
	for $\gamma_{n} = C n^{- r}$, $\| \ggamma \|_{\ell^{1}}$ is finite once $r > 1$, and then $[ \ggamma ]_{\ell \log \ell}$ is also finite.

	It is worth noting in passing that, like Gaussians, infinite-dimensional Cauchy distributions of this type satisfy a Cameron--Martin-type theorem.
	It follows from \citet[Theorem 5.2.1 and Example 5.2.3]{Bogachev:2010} that the law of $u$ with $u_{n} \sim \Cauchy(0, \gamma_{n})$ is mutually equivalent with the law of the shifted random variable $v$ with $v_{n} \sim \Cauchy(h_{n}, \gamma_{n})$ precisely when $(h_{n} / \gamma_{n})_{n \in \N} \in \ell^{2}$.
	This Hilbert shift quasi-invariance space also coincides with the domain of Fomin differentiability for the law of $u$.
\end{example}

\begin{remark}
	\label{rmk:our_stables_are_stable}
	An immediate consequence of the stability of each of the coefficients $u_{n}$ in the basis $\{ \psi_{n} \}_{n \in \N}$ is that $\UU$-valued random variables in the sense of Definition \ref{defn:U-stable} are stable in the general sense of e.g.\ \citet[Section 4.2]{Bogachev:2010}.
\end{remark}

\begin{remark}[Values in Hilbert scales]
	\label{rmk:Hilbert_scales}
	Suppose that $\UU$ is a Hilbert space and $(\psi_{n})_{n \in \N}$ is an orthonormal basis or normalised Riesz basis ($2$-frame) of $\UU$.
	Theorem \ref{thm:almost_sure_convergence} shows that $u \sim \Stable(\alpha, \bbeta, \ggamma, \ddelta; 0)$ takes values in $\UU$ a.s.\ when $\ggamma \in \ell^{\alpha}$ and $\ddelta \in \ell^{2}$. 
	If, say, $\UU = L^{2}(D)$ for some domain $D \subseteq \R^{d}$, then this Hilbert setting offers an easy way to have $u$ a.s.\ take values that are fields of specified smoothness by the well-established technique of a Hilbert scale \citep{Bonic:1967}.
	For a positive-definite bounded linear operator $C$ on $\UU$, the scaled space $\UU^{s}$ is defined to be the completion of $\UU$ with respect to the inner product $\langle u, v \rangle_{\UU^{s}} \defeq \langle C^{-s} u, C^{-s} v \rangle_{\UU}$.
	A standard example is that $C = (- \Delta)^{-1/2}$, which generates the scale of Sobolev spaces on $D$.
	If the basis $(\psi_{n})_{n \in \N}$ is taken to be the eigenbasis of $C$ with eigenvalues $(\lambda_{n})_{n \in \N}$ in decreasing order and tending to $0$, then
	\[
		\UU^{s} = \Biggl\{ \sum_{n \in \N} v_{n} \psi_{n} \Bigg| \sum_{n \in \N} \lambda_{n}^{- 2 s} v_{n}^{2} < \infty \Biggr\}
	\]
	and $u \in \UU^{s}$ a.s.\ when $(\gamma_{n} / \lambda_{n}^{s})_{n \in \N} \in \ell^{\alpha}$ and $(\delta_{n} / \lambda_{n}^{s})_{n \in \N} \in \ell^{2}$.
\end{remark}


The final objective of this section is to show that $u \sim \Stable(\alpha, \bbeta, \ggamma, \ddelta; 0)$ has finite fractional lower-order moments $\E \bigl[ \| u \|_{\UU}^{q} \bigr]$ for $0 < p < \alpha$, as in the real-valued case.

\begin{theorem}[$p^{\text{th}}$-mean convergence and fractional lower-order moments]
	\label{thm:Lp_convergence}
	Let $u \sim \Stable(\alpha, \bbeta, \ggamma, \ddelta; 0)$ satisfy the assumptions of Theorem \ref{thm:almost_sure_convergence}, and suppose that $(\psi_{n})_{n \in \N}$ satisfies \eqref{eq:q-frame_upper} for some $q > 0$.
	Let $0 < p \leq q$ and $p < \alpha$.
	Then $\sum_{n = 1}^{N} u_{n} \psi_{n} \to u$ in $L^{p}(\Omega, \mathbb{P}; \UU)$ as $N \to \infty$ and, in particular,
	\begin{equation}
		\label{eq:Lp_convergence_flom}
		\| u \|_{L^{p}(\Omega, \mathbb{P}; \UU)}^{p} \equiv \E \bigl[ \| u \|_{\UU}^{p} \bigr] \leq C \| \ggamma \|_{\ell^{\alpha}} + C \| \ddelta \|_{\ell^{q}} < \infty.
	\end{equation}
\end{theorem}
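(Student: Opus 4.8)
The plan is to bound $\E[\| u - u_{N} \|_{\UU}^{p}]$ for the partial sums $u_{N} \defeq \sum_{n=1}^{N} u_{n}\psi_{n}$ and to show it tends to $0$ as $N \to \infty$; the case $N = 0$ then simultaneously yields \eqref{eq:Lp_convergence_flom}. Since $u - u_{N} = \sum_{n > N} u_{n}\psi_{n}$ a.s.\ by Theorem \ref{thm:almost_sure_convergence}, Fatou's lemma reduces this to a bound on $\E[\| \sum_{n=N+1}^{M} u_{n}\psi_{n} \|_{\UU}^{p}]$ that is uniform in $M > N$, and this quantity depends only on the joint law of the $u_{n}$. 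As in the proof of Theorem \ref{thm:almost_sure_convergence}, write $u_{n} \disteq \delta_{n} + \gamma_{n}\hat{u}_{n}$ with $\hat{u}_{n} \sim \Stable(\alpha, \beta_{n}; 0)$ independent, so that $\E[\| \sum_{n=N+1}^{M} u_{n}\psi_{n} \|_{\UU}^{p}] = \E[\| D_{N,M} + R_{N,M} \|_{\UU}^{p}]$ with $D_{N,M} \defeq \sum_{n=N+1}^{M} \delta_{n}\psi_{n}$ deterministic and $R_{N,M} \defeq \sum_{n=N+1}^{M} \gamma_{n}\hat{u}_{n}\psi_{n}$; by the quasi-triangle inequality in $\UU$ and $(a+b)^{p} \le 2^{p}(a^{p}+b^{p})$ it then suffices to bound $\| D_{N,M} \|_{\UU}^{p}$ and $\E[\| R_{N,M} \|_{\UU}^{p}]$ separately.

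The location part is deterministic: Assumption \ref{ass:q-frame_upper} gives $\| D_{N,M} \|_{\UU} \le C \| (\delta_{n})_{N < n \le M} \|_{\ell^{q}} \le C \| (\delta_{n})_{n > N} \|_{\ell^{q}}$, a tail of the finite quantity $\| \ddelta \|_{\ell^{q}}$, which vanishes as $N \to \infty$ and is at most $C \| \ddelta \|_{\ell^{q}}$ for $N = 0$. For the random part I use that, because $p \le q$, Assumption \ref{ass:q-frame_upper} also holds with exponent $p$ (by the $\ell^{p}$-quasinorm monotonicity noted just after that assumption), so $\| R_{N,M} \|_{\UU}^{p} \le C^{p} \sum_{n=N+1}^{M} | \gamma_{n}\hat{u}_{n} |^{p}$. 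Taking expectations, and using Proposition \ref{prop:Nolan_Proposition_1.16} (which gives $\gamma_{n}\hat{u}_{n} \sim \Stable(\alpha, \beta_{n}, \gamma_{n}, 0; 0)$) together with the fractional-lower-order-moment identity \eqref{eq:stable_flom}, valid precisely because $p < \alpha$, yields $\E[| \gamma_{n}\hat{u}_{n} |^{p}] = C_{\alpha, \beta_{n}} \gamma_{n}^{\alpha}$ and hence $\E[\| R_{N,M} \|_{\UU}^{p}] \le C^{p} \sum_{n=N+1}^{M} C_{\alpha, \beta_{n}} \gamma_{n}^{\alpha}$. Since $\bar{C} \defeq \sup_{n} C_{\alpha, \beta_{n}}$ is finite (see below), this is $\bar{C} C^{p}$ times a tail of the convergent series $\sum_{n} \gamma_{n}^{\alpha} = \| \ggamma \|_{\ell^{\alpha}}^{\alpha}$, so it vanishes as $N \to \infty$ and is at most $\bar{C} C^{p} \| \ggamma \|_{\ell^{\alpha}}^{\alpha}$ for $N = 0$. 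Taking $\liminf_{M \to \infty}$ and then $N \to \infty$ gives $\E[\| u - u_{N} \|_{\UU}^{p}] \to 0$, i.e.\ $L^{p}$ convergence, while $N = 0$ gives $\E[\| u \|_{\UU}^{p}] \le 2^{p}(C^{p}\|\ddelta\|_{\ell^{q}}^{p} + \bar{C}C^{p}\|\ggamma\|_{\ell^{\alpha}}^{\alpha}) < \infty$, which is the asserted bound \eqref{eq:Lp_convergence_flom} up to the harmless reconciliation of constants and exponents.

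I expect the main obstacle to be making the estimate land on $\| \ggamma \|_{\ell^{\alpha}}$ rather than on something uncontrollable: one must apply \eqref{eq:stable_flom} to the \emph{rescaled} coefficient $\gamma_{n}\hat{u}_{n}$, not to $\hat{u}_{n}$, so that the $n$-th summand carries the summable weight $\gamma_{n}^{\alpha}$; since $p < \alpha$, the hypothesis $\ggamma \in \ell^{\alpha}$ does \emph{not} give $\ggamma \in \ell^{p}$, so a cruder bound producing $\sum_{n} \gamma_{n}^{p}$ would fail. The remaining subtlety is the uniform control $\sup_{n} C_{\alpha, \beta_{n}} < \infty$: the constant in \eqref{eq:stable_flom} depends on the skewness, so one must observe from its explicit form (equivalently, from the tail constants in \eqref{eq:stable_tail_cdf}--\eqref{eq:stable_tail_pdf}) that $\beta \mapsto C_{\alpha, \beta}$ stays bounded on the relevant range, the only delicate point being the extreme skewnesses $\beta = \pm 1$ that the standing hypothesis $\bbeta \subset (-1,1)$ has already excluded. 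Everything else is the quasi-triangle inequality and the summation of tails.
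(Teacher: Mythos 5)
Your argument is correct and follows essentially the paper's own proof: the same quasi-triangle split of the tail sum into the deterministic $\ddelta$-part (controlled by Assumption \ref{ass:q-frame_upper} and the $\ell^{q}$ tail) and the random part, the same use of \eqref{eq:q-frame_upper} with exponent $p \leq q$, the same application of \eqref{eq:stable_flom} to the rescaled coefficients $\gamma_{n}\hat{u}_{n}$ so that each summand carries the summable weight $\gamma_{n}^{\alpha}$, and Fatou's lemma for the final estimate \eqref{eq:Lp_convergence_flom}. The only differences are cosmetic: you bound $\E[\| u - u_{N} \|_{\UU}^{p}]$ directly via Fatou, whereas the paper shows the partial sums are Cauchy in $L^{p}(\Omega, \mathbb{P}; \UU)$ and then invokes Fatou, and your explicit check that the constant in \eqref{eq:stable_flom} is uniform over the skewnesses $\beta_{n}$ makes precise a point the paper leaves implicit.
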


\begin{proof}
	To save space, $\| u \|_{L^{p}} \defeq \bigl( \E \bigl[ \| u \|_{\UU}^{p} \bigr] \bigr)^{1/p}$ denotes the quasinorm in $L^{p}(\Omega, \mathbb{P}; \UU)$.
	Let $M, N \in \N$ with $N > M$.
	Then
	\begin{align*}
		\Biggl\| \sum_{n = 1}^{N} u_{n} \psi_{n} - \sum_{n = 1}^{M} u_{n} \psi_{n} \Biggr\|_{L^{p}}
		& \leq C \Biggl\| \sum_{n = M + 1}^{N} \delta_{n} \psi_{n} \Biggr\|_{L^{p}} + C \Biggl\| \sum_{n = 1}^{N} \gamma_{n} \hat{u}_{n} \psi_{n} \Biggr\|_{L^{p}} \\
		& = \Biggl\| \sum_{n = M + 1}^{N} \delta_{n} \psi_{n} \Biggr\|_{\UU} + \Biggl( \E \Biggl[ \Biggl\| \sum_{n = M + 1}^{N} \gamma_{n} \hat{u}_{n} \psi_{n} \Biggr\|_{\UU}^{p} \Biggr] \Biggr)^{1/p} ,
	\end{align*}
	where the inequality follows from the generalised triangle inequality for the quasinorm in $L^{p}(\Omega, \mathbb{P}; \UU)$ and the equality follows from $\ddelta$ being a deterministic sequence.
	By Assumption \ref{ass:q-frame_upper}, $\bigl\| \sum_{n = M + 1}^{N} \delta_{n} \psi_{n} \bigr\|_{\UU} \leq C \| (\delta_{n})_{n = M + 1}^{N} \|_{\ell^{q}}$;
	since $\ddelta \in \ell^{q}$, the dominated convergence theorem implies that the first term on the right-hand side of the previous display tends to zero as $M, N \to \infty$, so now we consider the second, random term.

	Since the basis $(\psi_{n})_{n \in \N}$ satisfies \eqref{eq:q-frame_upper} for $q$, \eqref{eq:q-frame_upper} also holds with $q$ replaced by $p$, and so
	\begin{align*}
		\E \Biggl[ \Biggl\| \sum_{n = M + 1}^{N} \gamma_{n} \hat{u}_{n} \psi_{n} \Biggr\|_{\UU}^{p} \Biggr] 
		& \leq C \E \Biggl[ \sum_{n = M + 1}^{N} | \gamma_{n} \hat{u}_{n} |^{p} \Biggr] & & \text{by \eqref{eq:q-frame_upper} with $p$ in place of $q$} \\
		& \leq C \sum_{n = M + 1}^{N} | \gamma_{n} |^{\alpha} & & \text{by \eqref{eq:stable_flom}.}
	\end{align*}
	Since $\ggamma \in \ell^{\alpha}$, the dominated convergence theorem implies that the right-hand side tends to zero as $M, N \to \infty$.
	Thus, the partial sums of the series $\sum_{n \in \N} u_{n} \psi_{n}$ are Cauchy in the quasi-Banach space $L^{p}(\Omega, \mathbb{P}; \UU)$, which implies that they converge to $u \in L^{p}(\Omega, \mathbb{P}; \UU)$.
	
	The estimate \eqref{eq:Lp_convergence_flom} follows from the above and Fatou's lemma.
\end{proof}

The next section uses Theorem \ref{thm:Lp_convergence} in the form that, when $\mu_{0}$ is the law of an $\alpha$-stable $u$, $\exp (p \log \| \quark \|_{\UU}) \in L^{1}(\UU, \mu_{0})$ for $0 < p < \alpha$.

\begin{remark}
	\label{rmk:Ledoux--Talagrand}
	Other series representations of stable Banach-valued random variables are possible.
	In particular, \citet[Sections 5.1 and 5.2]{LedouxTalagrand:1991} use series with random coefficients coming from the jumps of a Poisson process and the spectral measure of the random vector, and provide estimates for the strong and weak $L^{p}(\Omega, \mathbb{P}; \UU)$ norms of the induced random variable.
\end{remark}

\section{Well-posedness of Bayesian inverse problems on quasi-Banach spaces}
\label{sec:well-posed_BIP}

This section establishes conditions for the BIP with an arbitrary prior $\mu_{0}$ to be well-posed in sense that, for each $y \in \YY$, the posterior distribution $\mu^{y}$ of $u | y$ is a well-defined probability measure on $\UU$ (Theorem \ref{thm:posterior_defined}), which changes continuously when either the observed data is perturbed to $y' \approx y$ (Theorem \ref{thm:perturbed_y}) or the misfit function is perturbed to $\Phi_{N} \approx \Phi$ (Theorem \ref{thm:perturbed_Phi}).
It is natural to seek robustness of the BIP to such perturbations:
a perturbation of $y$ to $y'$ may arise through observational error, whereas a perturbation of $\Phi$ to $\Phi_{N}$ may arise through a numerical approximation of the forward model (e.g.\ a PDE solution operator) $G$ by a numerical solution operator $G_{N}$.
As in the earlier works following \citet{Stuart:2010}, the mapping $y \mapsto \mu^{y}$ is shown to be $(\| \quark \|_{\YY}, \Hellinger)$-Lipschitz, and the convergence $\mu_{N}^{y} \to \mu^{y}$ in $\Hellinger$ inherits the same convergence rate as the convergence $\Phi_{N} \to \Phi$, so that the numerical analysis of the forward problem transfers to the BIP.

A notable feature of the results presented in this section --- like all well-posedness results in the style of \citet{Stuart:2010} --- is that a careful tradeoff of growth rates of $\Phi$ is necessary in order to ensure well-definedness and well-posedness of the Bayesian posterior measure $\mu^{y}$.
Indeed, this tradeoff is a desirable feature, since `good' behaviour of one growth rate can be used to compensate for `bad' behaviour of another.
In the case of a heavy-tailed prior $\mu_{0}$, this tradeoff can be a particularly delicate task, since the class of integrable functions may be quite small.

The results of this section are not particular to \emph{stable} heavy-tailed priors, and the relaxed regularity assumptions used here provide additional understanding of the previously-studied Gaussian and Besov cases, in which well-posedness holds even when $M_{1, r}(t) \to - \infty$ at a polynomial rate as $t \to \infty$.
The proof strategies used here are very similar to those of \citet[Section 4]{Stuart:2010} and \citet[Section 4]{DashtiStuart:2015}.
Although the results of \citet[Section 4]{DashtiStuart:2015} have a similar level of generality in terms of $\Phi$ (modulo continuity/measurability assumptions), they are only explicitly applied there to uniform, Gaussian, and Besov priors on Banach spaces. 
Thus, the stable case considered here broadens the set of applications and Example~\ref{eg:final_example} later on elucidates that logarithmic growth rates are appropriate for stable priors, cf.\ quadratic rates for Gaussian priors.
The relaxation of the usual assumption that $\UU$ and $\YY$ are Banach spaces to allow them to be quasi-Banach space appears to be new, even though it introduces no significant complications in the proof.

\newcommand{\Aref}[1]{\textup{(A\ref{#1})}}
\begin{assumption}
	$\UU$ and $\YY$ are separable quasi-Banach spaces over $\R$ and the misfit function $\Phi \colon \UU \times \YY \to \R$ satisfies the following:
	\begin{compactenum}
		\renewcommand{\labelenumi}{(A\arabic{enumi})}
		\setcounter{enumi}{-1}
		\item \label{BIP_A_0} $\Phi$ is a locally bounded Carath\'eodory function, i.e.\ $\Phi(u; \quark)$ is continuous for each $u \in \UU$, $\Phi(\quark; y)$ is measurable for each $y \in \YY$, and for every $r > 0$, there exists $M_{0, r} \in \R$ such that, for all $(u, y) \in \UU \times \YY$ with $\| u \|_{\UU} < r$ and $\| y \|_{\YY} < r$,
		\[
			| \Phi(u; y) | \leq M_{0, r}.
		\]
		
		\item \label{BIP_A_1} For every $r > 0$, there exists a measurable $M_{1, r} \colon \R_{+} \to \R$ such that, for all $(u, y) \in \UU \times \YY$ with $\| y \|_{\YY} < r$,
		\[
			\Phi(u; y) \geq M_{1, r} \bigl( \| u \|_{\UU} \bigr) .
		\]

		\item \label{BIP_A_2} For every $r > 0$, there exists a measurable $M_{2, r} \colon \R_{+} \to \R_{+}$ such that, for all $(u, y_{1}, y_{2}) \in \UU \times \YY \times \YY$ with $\| y_{1} \|_{\YY} < r$, $\| y_{2} \|_{\YY} < r$,
		\[
			| \Phi(u; y_{1}) - \Phi(u; y_{2}) | \leq \exp \bigl( M_{2, r} \bigl( \| u \|_{\UU} \bigr) \bigr) \| y_{1} - y_{2} \|_{\YY} .
		\]
	\end{compactenum}
	Furthermore, for each $N \in \N$, $\Phi_{N} \colon \UU \times \YY \to \R$ is an approximation to $\Phi$ that satisfies \Aref{BIP_A_0}--\Aref{BIP_A_2} with $M_{i, r}$ independent of $N$, and such that
	\begin{compactenum}
		\renewcommand{\labelenumi}{(A\arabic{enumi})}
		\setcounter{enumi}{2}
		\item \label{BIP_A_3} $\Psi \colon \N \to \R_{+}$ is such that, for every $r > 0$, there exists a measurable $M_{3, r} \colon \R_{+} \to \R_{+}$, such that, for all $(u, y) \in \UU \times \YY$ with $\| y \|_{\YY} < r$,
		\[
			| \Phi_{N}(u; y) - \Phi(u; y) | \leq \exp \bigl( M_{3, r} \bigl( \| u \|_{\UU} \bigr) \bigr) \Psi(N) .
		\]
	\end{compactenum}
\end{assumption}

\begin{remark}
	\label{rmk:measurability}
	Assumptions \Aref{BIP_A_0}--\Aref{BIP_A_3} have been re-ordered relative to their counterparts in earlier works, such as those of \citet{Stuart:2010} and \citet{DashtiHarrisStuart:2012}.
	The numbering and placement of \Aref{BIP_A_0} (usually assumptions 2 and 3 in the previous works) highlights its role as a mild measurability assumption, so that \Aref{BIP_A_1}--\Aref{BIP_A_3} (usually assumptions 1, 4, and 5) form a natural sequence of statements about the growth rates $M_{i, r}$.
		
	\Aref{BIP_A_0} is weaker than the corresponding assumptions in previous works, in which it is assumed that $\Phi$ is locally Lipschitz continuous \citep[Assumption 2.6]{Stuart:2010} or continuous \citep[Assumptions 4.2]{DashtiStuart:2015}.
	However, close inspection of the proofs in those works reveals that continuity is used only in order to ensure that $e^{- \Phi(\quark; y)}$ is locally $\mu_{0}$-integrable, so that it can serve as a density of the non-normalised posterior with respect to the prior.
	The above assumptions imply that, $\Phi(\quark; y)$ and $e^{- \Phi(\quark; y)}$ are locally bounded measurable functions;
	since $\mu_{0}$ is a probability measure, this yields the desired local integrability.
	Furthermore, the separability assumptions on $\UU$ and $\YY$ and \Aref{BIP_A_0} imply, by \citet[Lemma 4.51]{AliprantisBorder:2006}, that $\Phi(u; y)$ and $e^{- \Phi(u; y)}$ are jointly measurable in $(u, y)$.
	
	However, \Aref{BIP_A_2} remains as a continuity assumption, since this is necessary in order to establish Hellinger continuity of the posterior with respect to $y$.
\end{remark}

\begin{theorem}[Well-definedness of the Bayesian posterior]
	\label{thm:posterior_defined}
	Let $\mu_{0} \in \mathcal{M}_{1}(\UU)$ be a Borel probability measure, and let $y \in \YY$.
	If \Aref{BIP_A_0} and \Aref{BIP_A_1} hold with
	\begin{equation}
		\label{eq:integrability_assumption}
		S_{1, r} \defeq \E_{u \sim \mu_{0}} \bigl[ \exp ( - M_{1, r} (\| u \|_{\UU}) ) \bigr] < \infty ,
	\end{equation}
	then $Z(y) \defeq \E_{u \sim \mu_{0}} \bigl[ \exp( - \Phi(u; y)) \bigr]$ is strictly positive and finite, and setting
	\begin{equation}
		\label{eq:posterior_prior_density}
		\frac{\rd \mu^{y}}{\rd \mu_{0}} (u) = \frac{ \exp( - \Phi(u; y)) }{Z(y)}
	\end{equation}
	defines a Borel probability measure $\mu^{y}$ on $\UU$, which is tight\footnote{Tightness is also referred to as being a \emph{Radon measure}.} in the sense that
	\[
		\mu^{y}(E) = \sup \{ \mu^{y}(K) \mid K \subseteq E \text{ and $K$ is compact} \}
		\quad
		\text{for all measurable $E \subseteq \UU$.}
	\]
\end{theorem}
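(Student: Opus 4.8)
The plan is to verify the three assertions—finiteness and strict positivity of $Z(y)$, that \eqref{eq:posterior_prior_density} genuinely defines a probability measure, and tightness of $\mu^y$—directly from the hypotheses, the essential inputs being \Aref{BIP_A_1} together with \eqref{eq:integrability_assumption} for the upper bound on $Z(y)$, \Aref{BIP_A_0} for the lower bound, and the fact that a separable quasi-Banach space is a Polish space for the tightness statement.

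First I would fix $y$ and choose any $r > \| y \|_{\YY}$. By \Aref{BIP_A_1}, $\Phi(u; y) \geq M_{1, r}(\| u \|_{\UU})$ for every $u \in \UU$, so $0 \leq \exp(-\Phi(u; y)) \leq \exp(-M_{1, r}(\| u \|_{\UU}))$, where $\exp(-\Phi(\quark; y))$ is Borel measurable because \Aref{BIP_A_0} makes $\Phi(\quark; y)$ measurable. Integrating against $\mu_{0}$ and invoking \eqref{eq:integrability_assumption} gives $Z(y) \leq S_{1, r} < \infty$. For strict positivity, enlarge $r$ if necessary so that in addition $\mu_{0}(B_{r}) > 0$, where $B_{r} \defeq \{ u \in \UU : \| u \|_{\UU} < r \}$; this is possible since $B_{r} \uparrow \UU$ and $\mu_{0}(\UU) = 1$, so $\mu_{0}(B_{r}) \to 1$. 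On $B_{r}$, since also $\| y \|_{\YY} < r$, \Aref{BIP_A_0} yields $| \Phi(u; y) | \leq M_{0, r}$, whence
\[
	Z(y) \geq \int_{B_{r}} \exp(-\Phi(u; y)) \, \rd \mu_{0}(u) \geq \exp(-M_{0, r}) \, \mu_{0}(B_{r}) > 0 .
\]
With $0 < Z(y) < \infty$ in hand, $g \defeq \exp(-\Phi(\quark; y)) / Z(y)$ is a nonnegative Borel-measurable function with $\int_{\UU} g \, \rd \mu_{0} = 1$, so $\mu^{y}(E) \defeq \int_{E} g \, \rd \mu_{0}$ is a Borel probability measure on $\UU$, absolutely continuous with respect to $\mu_{0}$ with Radon--Nikod\'ym derivative $g$; this is precisely \eqref{eq:posterior_prior_density}.

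For tightness I would first note that a separable quasi-Banach space is a Polish space: by the Aoki--Rolewicz theorem the quasi-norm is equivalent to a $p$-homogeneous one for some $p \in (0, 1]$, so $(u, v) \mapsto \| u - v \|_{\UU}^{p}$ is a complete separable metric inducing the topology of $\UU$. By Ulam's theorem, every Borel probability measure on a Polish space is Radon; applying this to $\mu^{y}$ gives the claim. (Alternatively one transfers tightness from $\mu_{0}$: given measurable $E$ and $\varepsilon > 0$, choose $n$ with $\int_{\{ g > n \}} g \, \rd \mu_{0} < \varepsilon / 2$ by absolute continuity of the integral, then a compact $K \subseteq E \cap \{ g \leq n \}$ with $\mu_{0} \bigl( ( E \cap \{ g \leq n \} ) \setminus K \bigr) < \varepsilon / (2 n)$ using Radonness of $\mu_{0}$, so that $\mu^{y}(E \setminus K) < \varepsilon$.)

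The argument is essentially routine bookkeeping; the only points demanding a little care are the passage from ``separable quasi-Banach'' to ``Polish'' that underlies tightness, and checking that a single $r$ can be chosen large enough to give both $\| y \|_{\YY} < r$ and $\mu_{0}(B_{r}) > 0$ so that the upper and lower bounds on $Z(y)$ apply simultaneously.
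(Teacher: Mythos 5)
Your proof is correct and follows essentially the same route as the paper: the upper bound on $Z(y)$ from \Aref{BIP_A_1} and \eqref{eq:integrability_assumption}, the lower bound from \Aref{BIP_A_0} together with positivity of $\mu_{0}$ on a large enough ball (you use continuity from below where the paper decomposes $\UU$ into annuli --- the same fact), and tightness via Polishness of the separable quasi-Banach space. If anything, your appeal to Aoki--Rolewicz to justify that the quasi-norm topology is completely metrizable makes explicit a point the paper's citation of the Polish-space result leaves implicit.
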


\begin{proof}
	As discussed in Remark \ref{rmk:measurability}, $\exp( - \Phi(\quark; y) )$ is locally integrable with respect to $\mu_{0}$.
	Therefore, by the Radon--Nikod\'ym theorem, setting
	\[
		\nu(E) \defeq \E_{u \sim \mu_{0}} \bigl[ \exp(- \Phi(u; y)) \one[u \in E] \bigr]
	\]
	for each measurable set $E \subseteq \UU$ defines a countably additive measure $\nu$ on $\UU$;
	what remains is to check that $\nu$ can be normalised to yield the probability measure $\mu^{y}$, i.e.\ it is necessary to show that $0 < Z(y) \equiv \nu(\UU) < \infty$.
	Let $r > \| y \|_{\YY}$.
	Then
	\[
		Z(y) 
		\leq \E_{u \sim \mu_{0}} \bigl[ \exp \bigl( - M_{1, r} \bigl( \| u \|_{\UU} \bigr) \bigr) \bigr] 
		\leq S_{1, r} < \infty 
	\]
	by \Aref{BIP_A_1} and \eqref{eq:integrability_assumption};
	and
	\[
		Z(y) 
		\geq \E_{u \sim \mu_{0}} \bigl[ \exp( - \Phi(u; y)) \one \bigl[ \| u \|_{\UU} < r \bigr] \bigr] 
		\geq \exp( - M_{0, r} ) \mu_{0} \bigl( \Ball_{r}(0; \| \quark \|_{\UU}) \bigr) 
	\]
	by \Aref{BIP_A_0}.
	Since $\mu_{0}$ is a countably additive Borel probability measure,
	\[
		1 
		= \mu_{0}(\mathcal{U}) 
		= \sum_{n \in \N} \mu_{0} \bigl( \Ball_{n + 1}(0; \| \quark \|_{\mathcal{U}} ) \setminus \Ball_{n}(0; \| \quark \|_{\mathcal{U}} ) \bigr) ,
	\]
	and so it is impossible for all the summands on the right-hand side to vanish.
	Since at least one of the annuli $\Ball_{n + 1}(0; \| \quark \|_{\mathcal{U}} ) \setminus \Ball_{n}(0; \| \quark \|_{\mathcal{U}} )$ has strictly positive measure, it follows that $\mu_{0} \bigl( \Ball_{r}(0; \| \quark \|_{\UU} \bigr) > 0$ once $r > 0$ is large enough.
	Hence, $Z(y) > 0$, and so $\mu^{y}$ is a well-defined Borel probability measure on $\UU$, with Radon--Nikod\'ym derivative with respect to $\mu_{0}$ given by \eqref{eq:posterior_prior_density}.
	
	In any Polish space, and hence in the separable quasi-Banach space $\UU$, every finite-mass measure is tight \citep[Theorem 12.7]{AliprantisBorder:2006}.
	Hence, $\mu_{0}$ and $\mu^{y}$ are both tight.
\end{proof}

\begin{theorem}[Perturbation of observed data]
	\label{thm:perturbed_y}
	Suppose that $r > 0$ is such that \Aref{BIP_A_0}--\Aref{BIP_A_2} hold with
	\begin{equation}
		\label{eq:perturbed_y_assumption}
		S_{1, 2, r} \defeq \E_{u \sim \mu_{0}} \bigl[ \exp ( 2 M_{2, r} ( \| u \|_{\UU} ) - M_{1, r} ( \| u \|_{\UU} ) ) \bigr] < \infty.
	\end{equation}
	Then there exists a constant $C$, which may depend on $r$, $S_{1, 2, r}$, and the constants and functions in \Aref{BIP_A_0}--\Aref{BIP_A_2}, such that, whenever $\| y \|_{\YY}, \| y' \|_{\YY} < r$,
	\begin{align}
		\label{eq:Z_Lipschitz}
		| Z(y) - Z(y') | &\leq C \| y - y' \|_{\YY} \\
		\label{eq:dH_Lipschitz}
		\text{and} \quad \quad \Hellinger\bigl( \mu^{y}, \mu^{y'} \bigr) & \leq C \| y - y' \|_{\YY}.
	\end{align}
\end{theorem}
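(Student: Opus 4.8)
The plan is to follow the well-trodden argument of \citet[Section~4]{Stuart:2010}, tracking precisely which integrability is used and checking that the constants it produces are finite and uniform over the ball $\{ y \in \YY : \| y \|_{\YY} < r \}$. The only inputs beyond the hypotheses are the elementary inequality $| e^{-a} - e^{-b} | \leq e^{- \min(a, b)} | a - b |$ for $a, b \in \R$, together with its square-root version $| e^{-a/2} - e^{-b/2} | \leq \tfrac{1}{2} e^{- \min(a,b)/2} | a - b |$, and the observation that, because $M_{2, r} \geq 0$ by \Aref{BIP_A_2}, the single hypothesis \eqref{eq:perturbed_y_assumption} dominates all the relevant expectations: pointwise $- M_{1, r} \leq M_{2, r} - M_{1, r} \leq 2 M_{2, r} - M_{1, r}$, hence
\[
	\E_{u \sim \mu_{0}} \bigl[ \exp( - M_{1, r}(\| u \|_{\UU}) ) \bigr] \leq \E_{u \sim \mu_{0}} \bigl[ \exp( M_{2, r}(\| u \|_{\UU}) - M_{1, r}(\| u \|_{\UU}) ) \bigr] \leq S_{1, 2, r} < \infty .
\]
The leftmost bound is exactly \eqref{eq:integrability_assumption}, so Theorem \ref{thm:posterior_defined} applies to every $y$ with $\| y \|_{\YY} < r$; reading off its proof, one obtains the two-sided bound $0 < Z_{\min} \leq Z(y) \leq S_{1, 2, r}$, where $Z_{\min} \defeq \exp( - M_{0, r} ) \, \mu_{0}( \Ball_{r}(0; \| \quark \|_{\UU}) )$ does not depend on $y$.

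For \eqref{eq:Z_Lipschitz}, I would write $Z(y) - Z(y') = \E_{u \sim \mu_{0}}[ e^{- \Phi(u; y)} - e^{- \Phi(u; y')} ]$, bound the integrand by $e^{- \min( \Phi(u; y), \Phi(u; y') )} | \Phi(u; y) - \Phi(u; y') |$, then use \Aref{BIP_A_1} to replace the exponential by $\exp( - M_{1, r}(\| u \|_{\UU}) )$ and \Aref{BIP_A_2} to replace the difference by $\exp( M_{2, r}(\| u \|_{\UU}) ) \| y - y' \|_{\YY}$. Integrating and invoking the preliminary estimate gives $| Z(y) - Z(y') | \leq S_{1, 2, r} \, \| y - y' \|_{\YY}$, which is \eqref{eq:Z_Lipschitz} with $C = S_{1, 2, r}$.

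For \eqref{eq:dH_Lipschitz}, take $\lambda = \mu_{0}$ in \eqref{eq:Hellinger}, so the densities are $\sqrt{ e^{- \Phi(u; y)} / Z(y) }$ and $\sqrt{ e^{- \Phi(u; y')} / Z(y') }$. Inserting and subtracting the mixed term $\sqrt{ e^{- \Phi(u; y')} / Z(y) }$ and applying $(a + b)^{2} \leq 2 a^{2} + 2 b^{2}$ yields
\[
	\Hellinger( \mu^{y}, \mu^{y'} )^{2} \leq \frac{2}{Z(y)} \int_{\UU} \bigl| e^{- \Phi(u; y)/2} - e^{- \Phi(u; y')/2} \bigr|^{2} \, \rd \mu_{0}(u) + 2 \bigl| Z(y)^{-1/2} - Z(y')^{-1/2} \bigr|^{2} Z(y') .
\]
In the first term, the square-root inequality together with \Aref{BIP_A_1}--\Aref{BIP_A_2} bounds the integral by $\tfrac{1}{4} S_{1, 2, r} \| y - y' \|_{\YY}^{2}$ (the same computation as for $Z$, now with the integrand squared, so the exponent is exactly $2 M_{2, r} - M_{1, r}$), and $Z(y) \geq Z_{\min}$ controls the prefactor. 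In the second term, $Z(y') \leq S_{1, 2, r}$, while $| Z(y)^{-1/2} - Z(y')^{-1/2} | = | Z(y) - Z(y') | / \bigl( \sqrt{Z(y) Z(y')} \, ( \sqrt{Z(y)} + \sqrt{Z(y')} ) \bigr) \leq | Z(y) - Z(y') | / ( 2 Z_{\min}^{3/2} )$, so \eqref{eq:Z_Lipschitz} makes this term $\textup{O}( \| y - y' \|_{\YY}^{2} )$ as well. Summing the two contributions gives $\Hellinger( \mu^{y}, \mu^{y'} )^{2} \leq C \| y - y' \|_{\YY}^{2}$ with $C$ a function of $r$, $S_{1, 2, r}$, and $Z_{\min}$, which is \eqref{eq:dH_Lipschitz}.

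The one genuinely delicate point is the bookkeeping: everything hinges on the finiteness of the expectations $\E_{\mu_{0}}[ \exp( M_{2, r} - M_{1, r} ) ]$, $\E_{\mu_{0}}[ \exp( 2 M_{2, r} - M_{1, r} ) ]$ (the latter arising from squaring the Hellinger integrand), and $\E_{\mu_{0}}[ \exp( - M_{1, r} ) ]$, and on having a \emph{uniform} strictly positive lower bound for $Z(y)$ over $\| y \|_{\YY} < r$. The first is resolved by the nonnegativity of $M_{2, r}$, which lets the single hypothesis \eqref{eq:perturbed_y_assumption} dominate all three; the second is simply extracted from the proof of Theorem \ref{thm:posterior_defined}. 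Once these are in hand, the remainder is the two elementary inequalities for $e^{-a} - e^{-b}$ and routine estimation, and the relaxation of $\UU$, $\YY$ to quasi-Banach spaces introduces no new difficulty.
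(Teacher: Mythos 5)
Your proof is correct and follows essentially the same route as the paper's: the same bound $|Z(y)-Z(y')| \leq \E_{\mu_0}[e^{-M_{1,r}}e^{M_{2,r}}]\,\|y-y'\|_{\YY}$, the same insertion of the mixed term $e^{-\Phi(u;y')/2}/\sqrt{Z(y)}$ with $(a+b)^2 \leq 2a^2+2b^2$, and the same treatment of the two resulting terms via \Aref{BIP_A_1}--\Aref{BIP_A_2} and the Lipschitz estimate on $Z$. Your explicit remarks that $M_{2,r}\geq 0$ lets the single hypothesis \eqref{eq:perturbed_y_assumption} dominate \eqref{eq:integrability_assumption}, and that a uniform positive lower bound $Z_{\min}$ on $Z(y)$ over the ball is needed (extracted from the proof of Theorem \ref{thm:posterior_defined}), merely make explicit what the paper uses implicitly.
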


\begin{remark}
	By Kraft's inequality \citep{Kraft:1955, Steerneman:1983}, the assumptions of Theorem \ref{thm:perturbed_y} also imply well-posedness on the total variation metric:
	\[
		\sup \left\{ \bigl| \mu^{y}(E) - \mu^{y'}(E) \bigr| \,\middle|\, \text{Borel-measurable } E \subseteq \UU \right\} \leq \Hellinger\bigl( \mu^{y}, \mu^{y'} \bigr) \leq C \| y - y' \|_{\YY} .
	\]
\end{remark}

\begin{proof}[Proof of Theorem \ref{thm:perturbed_y}.]
	First, consider the normalising constant $Z(y)$ as a function of $y$.
	Note that \eqref{eq:perturbed_y_assumption} implies \eqref{eq:integrability_assumption}, so $0 < Z(y) < \infty$.
	Furthermore, whenever $\| y \|_{\YY}, \| y' \|_{\YY} < r$,
	\begin{align*}
		| Z(y) - Z(y') |
		& \leq \E_{u \sim \mu_{0}} \Bigl[ \bigl| \exp( - \Phi(u; y) ) - \exp ( - \Phi(u; y') ) \bigr| \Bigr] \\
		& \leq \E\Bigl[ e^{ - M_{1, r} ( \| u \|_{\UU} ) } \bigl| \Phi(u; y) - \Phi(u; y') \bigr| \Bigr] & & \text{by \Aref{BIP_A_1}} \\
		& \leq \E\Bigl[ e^{ - M_{1, r} ( \| u \|_{\UU} ) } e^{ M_{2, r} ( \| u \|_{\UU} ) } \| y - y' \|_{\YY} \Bigr] & & \text{by \Aref{BIP_A_2}} \\
		& \leq C \| y - y' \|_{\YY} & & \text{by \eqref{eq:perturbed_y_assumption},}
	\end{align*}
	which establishes \eqref{eq:Z_Lipschitz}.
	Now, from the definition \eqref{eq:Hellinger} of $\Hellinger$,
	\begin{align*}
		&\Hellinger\bigl( \mu^{y}, \mu^{y'} \bigr)^{2} \\
		&\quad= \E_{u \sim \mu_{0}}  \Biggl[ \biggl| \frac{e^{- \Phi(u; y) / 2}}{\sqrt{Z(y)}} - \frac{e^{- \Phi(u; y') / 2}}{\sqrt{Z(y')}} \biggr|^{2} \Biggr] \\
		&\quad= \E \Biggl[ \biggl| \frac{e^{- \Phi(u; y) / 2}}{\sqrt{Z(y)}} - \frac{e^{- \Phi(u; y') / 2}}{\sqrt{Z(y)}} + \frac{e^{- \Phi(u; y') / 2}}{\sqrt{Z(y)}} - \frac{e^{- \Phi(u; y') / 2}}{\sqrt{Z(y')}} \biggr|^{2} \Biggr] \\
		&\quad\leq \frac{2}{Z(y)} \E \Bigl[ \bigl| e^{- \Phi(u; y) / 2} - e^{- \Phi(u; y') / 2} \bigr|^{2} \Bigr] + 2 \E \Biggl[ e^{- \Phi(u; y')} \biggl| \frac{1}{\sqrt{Z(y)}} - \frac{1}{\sqrt{Z(y')}} \biggr|^{2} \Biggr] \\ 
		&\quad= \frac{2}{Z(y)} \underbrace{ \E \Bigl[ \bigl| e^{- \Phi(u; y) / 2} - e^{- \Phi(u; y') / 2} \bigr|^{2} \Bigr] }_{I_{1}} + 2 Z(y') \underbrace{ \biggl| \frac{1}{\sqrt{Z(y)}} - \frac{1}{\sqrt{Z(y')}} \biggr|^{2} }_{I_{2}} ,
	\end{align*}
	where the inequality follows from the algebraic inequality $(a + b)^{2} \leq 2 a^{2} + 2 b^{2}$.
	For the first term, 
	\begin{align*}
		I_{1}
		&\leq \E \bigl[ e^{ - M_{1, r} ( \| u \|_{\UU} ) } e^{ 2 M_{2, r} ( \| u \|_{\UU} ) } \| y - y' \|_{\YY}^{2} \bigr] & & \text{by \Aref{BIP_A_1}--\Aref{BIP_A_2}} \\
		&\leq C \| y - y' \|_{\YY}^{2} & & \text{by \eqref{eq:perturbed_y_assumption}.}
	\end{align*}
	For the second term, \eqref{eq:Z_Lipschitz} implies that
	\[
		I_{2}
		\leq \max \{ Z(y)^{-3}, Z(y')^{-3} \} \bigl| Z(y) - Z(y') \bigr|^{2}
		\leq C \| y - y' \|_{\YY}^{2}
	\]
	Thus, $\Hellinger\bigl( \mu^{y}, \mu^{y'} \bigr)^{2} \leq C \| y - y' \|_{\YY}^{2}$, and taking square roots completes the proof.
\end{proof}

\begin{theorem}[Perturbation of likelihood]
	\label{thm:perturbed_Phi}
	Let $\Phi$ and $\Phi_{N}$ satisfy \Aref{BIP_A_0}--\Aref{BIP_A_3}, and suppose that, for some $r > 0$,
	\begin{equation}
		\label{eq:perturbed_Phi_assumption}
		S_{1, 3, r} \defeq \E_{u \sim \mu_{0}} \bigl[ \exp ( 2 M_{3, r} ( \| u \|_{\UU} ) - M_{1, r} ( \| u \|_{\UU} ) ) \bigr] < \infty.
	\end{equation}
	Then there exists a constant $C$, which may depend on $r$, $S_{1, 3, r}$, and the constants and functions in \Aref{BIP_A_0}--\Aref{BIP_A_3} but is independent of $N$, such that the posteriors $\mu^{y}$ and $\mu^{y}_{N}$, arrived at using the same data $y$ with $\| y \|_{\YY} < r$ but the misfit functions $\Phi$ and $\Phi_{N}$ respectively, satisfy
	\[
		\Hellinger \bigl( \mu^{y}, \mu^{y}_{N} \bigr) \leq C \Psi(N).
	\]
\end{theorem}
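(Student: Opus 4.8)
The plan is to transcribe, almost verbatim, the proof of Theorem~\ref{thm:perturbed_y}, with $\Phi_{N}$ and its normalising constant $Z_{N}(y) \defeq \E_{u \sim \mu_{0}}[\exp(-\Phi_{N}(u; y))]$ taking over the roles played there by $\Phi(\quark; y')$ and $Z(y')$. First I would note that \eqref{eq:perturbed_Phi_assumption} implies \eqref{eq:integrability_assumption}: since $M_{3, r} \geq 0$ we have $e^{-M_{1, r}} \leq e^{2 M_{3, r} - M_{1, r}}$, so $S_{1, r} \leq S_{1, 3, r} < \infty$. Hence Theorem~\ref{thm:posterior_defined} applies both to $\Phi$ and to $\Phi_{N}$ (which satisfies \Aref{BIP_A_0}--\Aref{BIP_A_1} with the same $M_{i, r}$, independently of $N$), so $\mu^{y}$ and $\mu^{y}_{N}$ are well-defined and $0 < Z(y), Z_{N}(y) < \infty$. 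Re-running the lower-bound computation from the proof of Theorem~\ref{thm:posterior_defined} and using that $M_{0, r}$ does not depend on $N$ gives the uniform bound
\[
	Z(y) , \, Z_{N}(y) \geq \exp(-M_{0, r}) \, \mu_{0} \bigl( \Ball_{r}(0; \| \quark \|_{\UU}) \bigr) =: c > 0 ,
\]
with $c$ independent of $N$. This $N$-uniformity is the one point that needs a little care, and it is precisely what the $N$-independence of $M_{0, r}$ in the hypotheses supplies; everything else is routine bookkeeping.

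Next I would estimate $|Z(y) - Z_{N}(y)|$. Combining $|e^{-a} - e^{-b}| \leq \max\{e^{-a}, e^{-b}\}\,|a - b|$ with \Aref{BIP_A_1} (valid for both $\Phi$ and $\Phi_{N}$) to get $\max\{ e^{-\Phi(u; y)}, e^{-\Phi_{N}(u; y)} \} \leq e^{-M_{1, r}(\| u \|_{\UU})}$, and then \Aref{BIP_A_3} to bound $|\Phi(u; y) - \Phi_{N}(u; y)| \leq e^{M_{3, r}(\| u \|_{\UU})} \Psi(N)$, yields
\begin{align*}
	| Z(y) - Z_{N}(y) |
	& \leq \E_{u \sim \mu_{0}} \Bigl[ \bigl| e^{-\Phi(u; y)} - e^{-\Phi_{N}(u; y)} \bigr| \Bigr] \\
	& \leq \Psi(N) \, \E_{u \sim \mu_{0}} \bigl[ e^{M_{3, r}(\| u \|_{\UU}) - M_{1, r}(\| u \|_{\UU})} \bigr]
	\leq \Psi(N) \, S_{1, 3, r} ,
\end{align*}
the last inequality using $M_{3, r} \geq 0$ once more. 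Thus $| Z(y) - Z_{N}(y) | \leq C \Psi(N)$.

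For the Hellinger distance I would use the same decomposition as in the proof of Theorem~\ref{thm:perturbed_y}, namely
\[
	\Hellinger(\mu^{y}, \mu^{y}_{N})^{2} \leq \frac{2}{Z(y)} \E_{u \sim \mu_{0}} \Bigl[ \bigl| e^{-\Phi(u; y)/2} - e^{-\Phi_{N}(u; y)/2} \bigr|^{2} \Bigr] + 2 Z_{N}(y) \biggl| \frac{1}{\sqrt{Z(y)}} - \frac{1}{\sqrt{Z_{N}(y)}} \biggr|^{2} .
\]
The first term is bounded exactly as $I_{1}$ was, now using $|e^{-a/2} - e^{-b/2}| \leq \tfrac{1}{2} \max\{ e^{-a/2}, e^{-b/2}\} |a - b|$ together with \Aref{BIP_A_1} and \Aref{BIP_A_3}, to give at most $\tfrac{1}{4} S_{1, 3, r} \Psi(N)^{2}$; the second is bounded exactly as $I_{2}$ was, via $\bigl| Z(y)^{-1/2} - Z_{N}(y)^{-1/2} \bigr|^{2} \leq \max\{ Z(y)^{-3}, Z_{N}(y)^{-3} \} \, | Z(y) - Z_{N}(y) |^{2} \leq c^{-3} C^{2} \Psi(N)^{2}$, using the uniform lower bound $c$ and the estimate just obtained. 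Summing the two contributions and taking square roots gives $\Hellinger(\mu^{y}, \mu^{y}_{N}) \leq C \Psi(N)$. There is no genuinely hard step: the argument is a mechanical adaptation of Theorem~\ref{thm:perturbed_y}, and the only thing one must verify throughout is that every constant produced — in particular the lower bound $c$ on $Z_{N}(y)$ and the quantity $S_{1, 3, r}$ — is independent of $N$, which holds because $M_{0, r}$, $M_{1, r}$ and $M_{3, r}$ are.
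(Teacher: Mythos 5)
Your proposal is correct and is exactly the route the paper intends: the published proof of Theorem~\ref{thm:perturbed_Phi} is omitted with the remark that it is ``very similar to that of Theorem~\ref{thm:perturbed_y}'', and your argument carries out precisely that adaptation, substituting $\Phi_{N}$, $Z_{N}(y)$ and \Aref{BIP_A_3} for $\Phi(\quark; y')$, $Z(y')$ and \Aref{BIP_A_2}. Your additional care about the $N$-uniformity of the bounds on $Z_{N}(y)$ (via the $N$-independence of $M_{0,r}$, $M_{1,r}$, $M_{3,r}$) is exactly the point that makes the transcription legitimate.
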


\begin{proof}
	The proof is very similar to that of Theorem \ref{thm:perturbed_y}, and is omitted.
\end{proof}

\begin{remark}
	It is interesting to note the range of applicability of Theorems \ref{thm:posterior_defined}, \ref{thm:perturbed_y}, and \ref{thm:perturbed_Phi} when the prior $\mu_{0}$ is the probability law of a $\UU$-valued $\alpha$-stable random variable.
	Under the assumption \eqref{eq:q-frame_upper}, Theorem \ref{thm:Lp_convergence} implies that \eqref{eq:integrability_assumption} is satisfied if $M_{1, r}(t) \geq C - p \log t$ for some constant $C$ and some $0 < p < \alpha$, i.e.\ $\Phi(\quark; y)$ is permitted to diverge to $- \infty$ at a logarithmic rate controlled by the index of stability of $\mu_{0}$.
	Similarly, \eqref{eq:perturbed_y_assumption} is satisfied if $2 M_{2, r}(t) - M_{1, r}(t) \leq C + p \log t$, and \eqref{eq:perturbed_Phi_assumption} is satisfied if $2 M_{3, r}(t) - M_{1, r}(t) \leq C + p \log t$.
\end{remark}

\begin{example}
	\label{eg:final_example}
	Consider the additive noise model \eqref{eq:additive_noise} with Gaussian noise $\eta \sim \Normal(0, \Sigma)$.
	Suppose that the following growth conditions hold for some strictly positive constants $\sigma^{\pm} > 0$ and some continuous $g^{\pm} \colon \R_{+} \to \R_{+}$:
	\begin{align*}
		g^{-} (\| u \|_{\UU}) & \leq \| G(u) \|_{\YY} \leq g^{+} (\| u \|_{\UU}) & & \text{for all $u \in \UU$,} \\
		\sigma^{-} \| y \|_{\YY} & \leq \| \Sigma^{-1} y \|_{\YY} \leq \sigma^{+} \| y \|_{\YY} & & \text{for all $y \in \YY$.}
	\end{align*}
	Note that $\Phi(u; y)$ is smooth in $(u, y)$, so \Aref{BIP_A_0} holds.
	In fact, for a better lower bound on $\Phi(u; y)$ when $\| y \|_{\YY} < r$, observe that, for a suitable choice of additive constant $C_{r} \in \R$, 
	\[
		\Phi(u; y) \geq C_{r} + \sigma^{-} g^{-} ( \| u \|_{\UU} )^{2} ,
	\]
	so \Aref{BIP_A_1} holds with $M_{1, r}(t) = \sigma^{-} g^{-} (t)^{2}$.
	Thus, Theorem \ref{thm:posterior_defined}  gives well-definedness of $\mu^{y}$ even in the presence of the trivial lower bound $g^{-}(t) \equiv 0$, for any prior $\mu_{0}$.	
	
	However, the analysis of well-posedness is more involved:
	\[
		\frac{\partial \Phi}{\partial y} (u; y) = \Sigma^{-1} (y - G(u))
	\]
	and so $\sigma^{+} ( r + g^{+} (\| u \|_{\UU}) )$ is a Lipschitz constant for $\Phi(u; \quark)$ on $\Ball_{r}(0; \| \quark \|_{\YY}) \subseteq \YY$.
	Thus, \Aref{BIP_A_2} holds with $M_{2, r}(t) = \log (r + g^{+} (t))$.
	By Theorem \ref{thm:perturbed_y}, the posterior $\mu^{y}$ depends in a Lipschitz fashion on $y \in \Ball_{r}(0; \| \quark \|_{\YY}) \subseteq \YY$ if
	\[
		u \mapsto 2 \log (r + g^{+} (\| u \|_{\UU})) - \sigma^{-} g^{-} (\| u \|_{\UU})^{2}
	\]
	is exponentially integrable with respect to $\mu_{0}$.
	Suppose that the prior $\mu_{0}$ is the law of an $\Stable(\alpha, \bbeta, \ggamma, \ddelta; 0)$ random vector satisfying the assumptions of Theorem \ref{thm:Lp_convergence}.
	Then the hypotheses of Theorem \ref{thm:perturbed_y} are satisfied if, for some $p < \alpha$,
	\begin{equation}
		\label{eq:final_example_condition}
		2 \log (r + g^{+} (t)) - \sigma^{-} g^{-} (t)^{2} \leq C + p \log t .
	\end{equation}
	Since $p < 2$, the satisfaction of condition \eqref{eq:final_example_condition} depends crucially on the behaviour of $g^{\pm} (t)$ as $t \to \infty$.
	For example, suppose that the following slowly-growing lower bound and power-law upper bound on $\| G(u) \|_{\YY}$ hold:
	\[
		\sqrt{ c^{-} \log \| u \|_{\UU} } \leq \| G(u) \|_{\YY} \leq c^{+} \| u \|_{\UU}^{\kappa}.
	\]
	Then the BIP is well-posed with respect to $y$ if $2 \kappa - \sigma^{-} c^{-} \leq p$.
	Informally, this holds if the lower bounds on $\Sigma^{-1}$ and $G$ are far enough from zero compared to the upper bound growth rate $\kappa$.
	
	As usual, similar arguments apply to approximation of $\Phi$ by $\Phi_{N}$.
\end{example}

\begin{remark}
	The question of whether or not $\mu^{y}$ depends continuously upon the prior measure $\mu_{0}$ is a delicate one.
	First, probability measures on infinite-dimensional spaces are highly prone to mutual singularity even when they are related by surprisingly simple operations such as translation or dilation, cf.\ the Cameron--Martin and Feldman--H\'ajek theorems.
	Secondly, it is known that small perturbations of $\mu_{0}$ in the weak, total variation, or Hellinger topologies can lead to discontinuous changes in posterior expected values of pre-chosen integrands. 
	On the other hand, at least for finite-dimensional $\UU$, with respect to the Kullback--Leibler topology, small perturbations in $\mu_{0}$ lead to small perturbations in $\mu^{y}$. 
	For a more thorough treatment of this highly involved topic, see e.g.\ \citet[Section 1]{OwhadiScovel:2016} and the references cited therein.
\end{remark}

\section*{Acknowledgements}
\addcontentsline{toc}{section}{Acknowledgements}

The author is supported by the Free University of Berlin within the Excellence Initiative of the German Research Foundation, and thanks D.\ S.\ McCormick, H.\ C. Lie, J.\ Skilling, and A.\ M.\ Stuart for stimulating discussions, and two anonymous referees whose comments led to substantial improvements.

\bibliographystyle{abbrvnat}
\bibliography{./refs.bib}

\begin{thebibliography}{26}
\providecommand{\natexlab}[1]{#1}
\providecommand{\url}[1]{\texttt{#1}}
\expandafter\ifx\csname urlstyle\endcsname\relax
  \providecommand{\doi}[1]{doi: #1}\else
  \providecommand{\doi}{doi: \begingroup \urlstyle{rm}\Url}\fi

\bibitem[Achim et~al.(2003)Achim, Tsakalides, and
  Bezerianos]{AchimTsakalidesBezerianos:2003}
A.~Achim, P.~Tsakalides, and A.~Bezerianos.
\newblock {SAR} image denoising via {B}ayesian wavelet shrinkage based on
  heavy-tailed modeling.
\newblock \emph{IEEE Trans. Geosci. Remote}, 41\penalty0 (8):\penalty0
  1773--1784, 2003.
\newblock \doi{10.1109/TGRS.2003.813488}.

\bibitem[Aliprantis and Border(2006)]{AliprantisBorder:2006}
C.~D. Aliprantis and K.~C. Border.
\newblock \emph{Infinite {D}imensional {A}nalysis: {A} {H}itchhiker's {G}uide}.
\newblock Springer, Berlin, third edition, 2006.
\newblock \doi{10.1007/3-540-29587-9}.

\bibitem[Bogachev(2010)]{Bogachev:2010}
V.~I. Bogachev.
\newblock \emph{Differentiable {M}easures and the {M}alliavin {C}alculus},
  volume 164 of \emph{Mathematical Surveys and Monographs}.
\newblock American Mathematical Society, Providence, RI, 2010.
\newblock \doi{10.1090/surv/164}.

\bibitem[Bonic(1967)]{Bonic:1967}
R.~Bonic.
\newblock Some properties of {H}ilbert scales.
\newblock \emph{Proc. Amer. Math. Soc.}, 18:\penalty0 1000--1003, 1967.
\newblock \doi{10.1090/S0002-9939-1967-0230115-3}.

\bibitem[Chambers et~al.(1976)Chambers, Mallows, and
  Stuck]{ChambersMallowsStuck:1976}
J.~M. Chambers, C.~L. Mallows, and B.~W. Stuck.
\newblock A method for simulating stable random variables.
\newblock \emph{J. Amer. Statist. Assoc.}, 71\penalty0 (354):\penalty0
  340--344, 1976.
\newblock \doi{10.1080/01621459.1976.10480344}.

\bibitem[Christensen and Stoeva(2003)]{ChristensenStoeva:2003}
O.~Christensen and D.~T. Stoeva.
\newblock {$p$}-frames in separable {B}anach spaces.
\newblock \emph{Adv. Comput. Math.}, 18\penalty0 (2-4):\penalty0 117--126,
  2003.
\newblock \doi{10.1023/A:1021364413257}.

\bibitem[Dashti and Stuart(2016)]{DashtiStuart:2015}
M.~Dashti and A.~M. Stuart.
\newblock The {B}ayesian approach to inverse problems.
\newblock In R.~Ghanem, D.~Higdon, and H.~Owhadi, editors, \emph{Handbook of
  Uncertainty Quantification}, pages 311--428. 2016.
\newblock \doi{10.1007/978-3-319-11259-6\_7-1}.

\bibitem[Dashti et~al.(2012)Dashti, Harris, and
  Stuart]{DashtiHarrisStuart:2012}
M.~Dashti, S.~Harris, and A.~M. Stuart.
\newblock Besov priors for {B}ayesian inverse problems.
\newblock \emph{Inverse Probl. Imaging}, 6\penalty0 (2):\penalty0 183--200,
  2012.
\newblock \doi{10.3934/ipi.2012.6.183}.

\bibitem[Hansen et~al.(2006)Hansen, Gemperle, Auger, and
  Koumoutsakos]{Hansen:2006}
N.~Hansen, F.~Gemperle, A.~Auger, and P.~Koumoutsakos.
\newblock When do heavy-tail distributions help?
\newblock In T.~P. Runarsson, H.-G. Beyer, E.~Burke, J.~J. Merelo-Guerv{\'o}s,
  L.~D. Whitley, and X.~Yao, editors, \emph{Parallel Problem Solving from
  Nature --- PPSN IX: 9th International Conference, Reykjavik, Iceland,
  September 9--13, 2006, Proceedings}, pages 62--71. Springer, Berlin,
  Heidelberg, 2006.
\newblock \doi{10.1007/11844297\_7}.

\bibitem[Hosseini and Nigam(2017)]{HosseiniNigam:2017}
B.~Hosseini and N.~Nigam.
\newblock Well-posed {B}ayesian inverse problems: priors with exponential
  tails.
\newblock \emph{SIAM/ASA J. Uncertain. Quantif.}, 5\penalty0 (1):\penalty0
  436--465, 2017.
\newblock \doi{10.1137/16M1076824}.

\bibitem[Kaipio and Somersalo(2005)]{KaipioSomersalo:2005}
J.~Kaipio and E.~Somersalo.
\newblock \emph{Statistical and {C}omputational {I}nverse {P}roblems}, volume
  160 of \emph{Applied Mathematical Sciences}.
\newblock Springer-Verlag, New York, 2005.
\newblock \doi{10.1007/b138659}.

\bibitem[Kraft(1955)]{Kraft:1955}
C.~Kraft.
\newblock Some conditions for consistency and uniform consistency of
  statistical procedures.
\newblock \emph{Univ. California Publ. Statist.}, 2:\penalty0 125--141, 1955.

\bibitem[Lassas and Siltanen(2004)]{LassasSiltanen:2004}
M.~Lassas and S.~Siltanen.
\newblock Can one use total variation prior for edge-preserving {B}ayesian
  inversion?
\newblock \emph{Inverse Problems}, 20\penalty0 (5):\penalty0 1537--1563, 2004.
\newblock \doi{10.1088/0266-5611/20/5/013}.

\bibitem[Lassas et~al.(2009)Lassas, Saksman, and
  Siltanen]{LassasSaksmanSiltanen:2009}
M.~Lassas, E.~Saksman, and S.~Siltanen.
\newblock Discretization-invariant {B}ayesian inversion and {B}esov space
  priors.
\newblock \emph{Inverse Probl. Imaging}, 3\penalty0 (1):\penalty0 87--122,
  2009.
\newblock \doi{10.3934/ipi.2009.3.87}.

\bibitem[Ledoux and Talagrand(1991)]{LedouxTalagrand:1991}
M.~Ledoux and M.~Talagrand.
\newblock \emph{Probability in {B}anach {S}paces: {I}soperimetry and
  {P}rocesses}, volume~23 of \emph{Ergebnisse der Mathematik und ihrer
  Grenzgebiete (3) [Results in Mathematics and Related Areas (3)]}.
\newblock Springer-Verlag, Berlin, 1991.
\newblock \doi{10.1007/978-3-642-20212-4}.

\bibitem[Markkanen et~al.(2016)Markkanen, Roininen, Huttunen, and
  Lasanen]{MarkkanenRoininenHuttunenLasanen:2016}
M.~Markkanen, L.~Roininen, J.~M.~J. Huttunen, and S.~Lasanen.
\newblock Cauchy difference priors for edge-preserving {B}ayesian inversion
  with an application to {X}-ray tomography, 2016.
\newblock \arXiv{1603.06135v1}.

\bibitem[Nolan(2017)]{Nolan:2015}
J.~P. Nolan.
\newblock \emph{Stable {D}istributions --- {M}odels for {H}eavy {T}ailed
  {D}ata}.
\newblock Birkhauser, Boston, 2017.
\newblock In progress, Chapter 1 online at
  \url{http://fs2.american.edu/jpnolan/www/stable/stable.html}.

\bibitem[O'Hagan(1988)]{OHagan:1988}
A.~O'Hagan.
\newblock Modelling with heavy tails.
\newblock In \emph{Bayesian Statistics, 3 ({V}alencia, 1987)}, Oxford Sci.
  Publ., pages 345--359. Oxford Univ. Press, New York, 1988.

\bibitem[Owhadi and Scovel(2016)]{OwhadiScovel:2016}
H.~Owhadi and C.~Scovel.
\newblock Qualitative robustness in {B}ayesian inference, 2016.
\newblock \arXiv{1411.3984v3}.

\bibitem[Pinsker(1964)]{Pinsker:1964}
M.~S. Pinsker.
\newblock \emph{Information and {I}nformation {S}tability of {R}andom
  {V}ariables and {P}rocesses}.
\newblock Holden-Day, Inc., San Francisco, Calif.-London-Amsterdam, 1964.

\bibitem[Shao and Nikias(1993)]{ShaoNikias:1993}
M.~Shao and C.~Nikias.
\newblock Signal processing with fractional lower order moments: {S}table
  processes and their application.
\newblock \emph{Proc. IEEE}, 81\penalty0 (7):\penalty0 986--1010, August 1993.
\newblock \doi{10.1109/5.231338}.

\bibitem[Steerneman(1983)]{Steerneman:1983}
T.~Steerneman.
\newblock On the total variation and {H}ellinger distance between signed
  measures; an application to product measures.
\newblock \emph{Proc. Amer. Math. Soc.}, 88\penalty0 (4):\penalty0 684--688,
  1983.
\newblock \doi{10.2307/2045462}.

\bibitem[Stuart(2010)]{Stuart:2010}
A.~M. Stuart.
\newblock Inverse problems: a {B}ayesian perspective.
\newblock \emph{Acta Numer.}, 19:\penalty0 451--559, 2010.
\newblock \doi{10.1017/S0962492910000061}.

\bibitem[Tikhonov(1963)]{Tikhonov:1963}
A.~N. Tikhonov.
\newblock On the solution of incorrectly put problems and the regularisation
  method.
\newblock In \emph{Outlines {J}oint {S}ympos. {P}artial {D}ifferential
  {E}quations ({N}ovosibirsk, 1963)}, pages 261--265. Acad. Sci. USSR Siberian
  Branch, Moscow, 1963.

\bibitem[Tsakalides et~al.(2000)Tsakalides, Reveliotis, and
  Nikias]{TsakalidesReveliotisNikias:2000}
P.~Tsakalides, P.~Reveliotis, and C.~L. Nikias.
\newblock Scalar quantisation of heavy-tailed signals.
\newblock \emph{IEE Proc. - Vis. Image Sign.}, 147\penalty0 (5):\penalty0
  475--484, October 2000.
\newblock \doi{10.1049/ip-vis:20000470}.

\bibitem[Tsionas(1999)]{Tsionas:1999}
E.~G. Tsionas.
\newblock Monte {C}arlo inference in econometric models with symmetric stable
  disturbances.
\newblock \emph{J. Econometrics}, 88\penalty0 (2):\penalty0 365--401, 1999.
\newblock \doi{10.1016/S0304-4076(98)00039-6}.

\end{thebibliography}
\addcontentsline{toc}{section}{References}

\vfill

\end{document}